\newtheorem{theorem}{Theorem}%[section]
\newtheorem{example}[theorem]{Example}%[section]
\newtheorem{lemma}[theorem]{Lemma}%[section]
\newtheorem{proposition}[theorem]{Proposition}%[section]
\newtheorem{corollary}[theorem]{Corollary}%[section]
\numberwithin{equation}{section}
\newcommand{\C}{\ensuremath{\mathbb C}\xspace}
\renewcommand{\l}{\ensuremath{\lambda}}
\newcommand{\g}{\ensuremath{\mathfrak{g}}}
\newcommand{\h}{\ensuremath{\mathfrak{h}}}
\newcommand{\Z}{\ensuremath{\mathbb{Z}}\xspace}
\newcommand{\N}{\ensuremath{\mathbb{N}}\xspace}
\newcommand{\id}{\operatorname{Id}\xspace}
\renewcommand{\phi}{\varphi}
\renewcommand{\leq}{\leqslant}
\renewcommand{\geq}{\geqslant}
\def\mg{\mathfrak{g}}
\def\sl{\mathfrak{sl}}
\def\H{\mathcal{H}}
\def\s{\sigma}
\def\g{\mathfrak{g}}
\def\P{F}
\def\S{E}
\def\d{\delta}
\def\l{\mathfrak{l}}
\def\cP{\mathcal{P}}
\def\a{\mathbf{a}}
\begin{document}
\title%[Modules over $\mathcal{W}_n$ and  $\mathfrak{sl}_{n+1}(\C)$]
%{$A_l^{(1)}$-module structure on $U(\hat{\h}_l)$}
{Module structure on $U(\h)$ for Kac-Moody algebras}
\author{Yan-an Cai, Haijun Tan and Kaiming Zhao}
\date{}
\maketitle

\begin{abstract} Let $\g$ be an arbitrary Kac-Moody algebra with a Cartan sualgebra $\h$. In this paper, we determine the category of $\g$-modules that are  free $U(\h)$-modules of rank 1.
\end{abstract}

\vskip 10pt \noindent {\em Keywords:} Kac-Moody algebras,  non-weight
module, irreducible module

\vskip 5pt \noindent {\em 2000  Math. Subj. Class.:} 17B10, 17B20,
17B65, 17B66, 17B68

\vskip 10pt

\section{Introduction}
Lie algebra theory has become more and more widely used   in many areas of mathematics and physics. In order to study infinite-dimensional Lie algebras with a  root space decomposition similar to  finite dimensional simple Lie algebras,  Kac and Moody independently introduced Lie algebras associated with generalized Cartan matrices in late 1960s, now called Kac-Moody algebras. For the last few decades, these algebras (particularly affine Kac-Moody algebras) have played important roles in many other mathematical areas such as combinatorics, number theory, integrable systems, operator theory, quantum stochastic process and in quantum field theory of physics.

To understand the representation theory of an algebra, it is important to classify all its simple modules. This turns out to be very difficult for non-abelian Lie algebras, see \cite{B}. In fact, no complete classification of simple modules is known for any Kac-Moody algebras except for  the Lie algebra $\mathfrak{sl}_2$. Though the representation theory of Kac-Moody algebras is in general not well understood, some special classes of simple modules for certain families of Kac-Moody algebras are classified, in particular,  finite dimensional simple modules, simple highest weight modules for generic Kac-Moody algebras, simple weight modules with finite dimensional weight spaces for finite dimensional simple Lie algebras (see \cite{M}) and for affine Kac-Moody algebras (see \cite{DG}, \cite{FT}),  and simple Whittaker modules for  $\tilde{\mathfrak{sl}}_2$ (see \cite{ALZ}). There are other simple modules constructed over affine Kac-Moody algebras, see \cite{BBFK},  \cite{Chr},   \cite{FGM}, \cite{GZ},  \cite{LZ}, \cite{MZ}, \cite{R}, \cite{R1} and references therein.

Throughout this paper we denote by $ \Z, \Z_+, \N$, $\C$ and $\C^*$ the sets of all integers, nonnegative integers, positive integers, complex numbers and nonzero complex numbers, respectively. All algebras and vector spaces are over $\C$. Also, we denote by $U(\mathfrak{a})$ the enveloping algebra for any Lie algebra $\mathfrak{a}$ and denote by $\C[x_1,\cdots,x_n]$ the polynomial algebra in commuting variables $x_1,\cdots,x_n$ over $\C$.

 Let $\h$ be the standard Cartan subalgebra of a Kac-Moody algebra $\g$.
 Then a weight module over $\g$ is a $\g$-module on which $\h$ acts diagonalizably. The family of weight modules is one of the classical families of modules for Kac-Moody algebras. Let  $\H(\g)$ be the category of $\g$-modules defined by the ``opposite condition", namely, the full subcategory of $\g$-mod consisting of modules which are free of rank 1 when restricted to $U(\h)$.
In \cite{N}, Nilsson determined the category $\H(\mathfrak{sl}_{l+1})$. These modules are as follows.

\begin{example}%\label{EX1}
The Cartan matrix of $\mathfrak{sl}_{l+1}(\C)$ is
\[A=\begin{pmatrix}
2 & -1 & & &\\
-1 & 2 & -1 & &\\
 & \ddots & \ddots & \ddots & \\
 & & -1 & 2 & -1 \\
 & & & -1 & 2
\end{pmatrix}_{l\times l}.\]
Denote by $E_{i,j}(1\leq i,j\leq l+1)$ the $(l+1)\times(l+1)$ unit matrix with $1$ at the $(i,j)$-entry and  zero everywhere else. For $1\leq k\leq l+1$, let $$\bar{h}_{k}:=E_{k,k}-\frac{1}{l+1}\sum_{i=1}^{l+1}E_{i,i}.$$
Then $\bar{h}_{1}+\bar{h}_{2}+\cdots \bar{h}_{l+1}=0$, and \[  \{E_{i,j}| 1 \leq i\neq j \leq l+1\} \cup \{\bar{h}_{1}, \ldots \bar{h}_{l}\} \]
is a basis for $\mathfrak{sl}_{l+1}(\C)$.    For any $S\subseteq\{1,\cdots,l+1\}, b\in\C, a_{l+1}=1$, and $ \a=(a_1,\cdots, a_l)\in(\C^*)^l$,  the polynomial algebra $\C[\bar{h}_1,\cdots, \bar{h}_l]$ becomes an $\mathfrak{sl}_{l+1}$-module with the following action  for all $g\in\C[\bar{h}_1,\cdots, \bar{h}_l]$ and $i,j\in\{1,2,\cdots,   l+1\}, k\in\l=\{1,2,\cdots,l\}$:
\[
\begin{aligned}
h_k\cdot g&=h_kg, \\
E_{i,j}\cdot g&=a_ia_j^{-1}(\delta_{i\in S}+\delta_{i\not\in S}(h_i-b-1))(\delta_{j\in S}(h_j-b)+\delta_{j\not\in S})\bar{\sigma}_{i}\bar{\sigma}_j^{-1}(g),
\end{aligned}
\]
where $\bar{\sigma}_i$ $(1\leq i\leq l)$ is the algebra automorphism of $\C[\bar{h}_1,\cdots,\bar{h}_l]$ defined by mapping $\bar{h}_k$ to $\bar{h}_k-\delta_{k,i}$ while $\bar{\sigma}_{l+1}$ is the identity map, and
$$\delta_{s\in S}=\begin{cases}&1 \text{ if } s\in S\\ &0 \text{ if } s\notin S\end{cases}, \  \  \delta_{s\notin S}=\begin{cases}&0 \text{ if } s\in S\\ &1 \text{ if } s\notin S\end{cases}.
$$   We denote this module by $M(\a,b,S)$. See Lemma 5 in \cite{CLNZ}. Nilsson \cite{N} proved that \[
\H(\mathfrak{sl}_{l+1}(\C))=\{M(\a,b,S)|a\in(\C^*)^l,b\in\C,S\subseteq\{1,\cdots,l+1\}\}.
\]
Moreover, $M(\a,b,S)$ is simple if and only if $(l+1)b\not\in\Z$ or $S\neq\varnothing$ or $S\neq\{1,\cdots,l+1\}$.
\end{example}

For the convenience of our later use, we now rewrite the actions in the above example.
Let $\{e_i=E_{i,i+1},f_i=E_{i+1,i},h_i=E_{i,i}-E_{i+1,i+1}|i=1,\cdots,l\}$ be the set of Chevalley generators of  $\mathfrak{sl}_{l+1}(\C)$  and set
$$
(H_1,\cdots,H_l)^T=A^{-1}(h_1,\cdots,h_l)^T.$$
Then $ H_k=\sum\limits_{i=1}^k\bar{h}_i$  for all $k=1,\cdots,l.$
%
%{\color{blue}$B=(b_{ij})=A^{-1}$, then $b_{ij}=\left\{\begin{array}{ll}
%\frac{j(l-i+1)}{l+1}, & j\leq i,\\
%\frac{i(l-j+1)}{l+1}, & j>i.
%\end{array}\right.$}(this will not appear in the paper, just a remark for the inverse of $A$)
Replacing $a_ia_{i+1}^{-1}$ by $a_i$, the action of $\mathfrak{sl}_{l+1}(\C)$ on $M(\a,b,S)\cong\C[H_1,\cdots,H_l]$ becomes:
%For any $S\subseteq\{1,\cdots,l+1\}, b\in\C$, and $ \a=(a_1,\cdots, a_l)\in(\C^*)^l$,  the polynomial algebra $\C[H_1,\cdots, H_l]$ becomes an $\mathfrak{sl}_{l+1}$-module with the following action  for all $g\in\C[H_1,\cdots, H_l]$ and $i\in\l:=\{1,2,\cdots, l\}$,
\[\begin{aligned}
&H_i\cdot g=H_ig;\\
&e_i\cdot g=\left\{\begin{array}{ll}
a_i(H_{i+1}-H_i-b)\s_i(g), i,i+1\in S,\\
a_i\s_i(g), i\in S,i+1\not\in S,\\
a_i(H_i-H_{i-1}-b-1)(H_{i+1}-H_i-b)\s_i(g), i\not\in S, i+1\in S,\\
a_i(H_i-H_{i-1}-b-1)\s_i(g), i,i+1\not\in S;
\end{array}\right.\\
&f_i\cdot g=\left\{\begin{array}{ll}
a_i^{-1}(H_i-H_{i-1}-b)\s_i^{-1}(g), i,i+1\in S,\\
a_i^{-1}(H_i-H_{i-1}-b)(H_{i+1}-H_i-b-1)\s_i^{-1}(g), i\in S,i+1\not\in S,\\
a_i^{-1}\s_i^{-1}(g), i\not\in S, i+1\in S,\\
a_i^{-1}(H_{i+1}-H_i-b-1)\s_i^{-1}(g), i,i+1\not\in S;
\end{array}\right.
\end{aligned}\]
where $g\in\C[H_1,\cdots, H_l]$, $i\in\l$, $ \s_i(g(H_1, H_2, \cdots, H_l))=g(H_1-\delta_{1, i}, H_2-\delta_{2, i}, \cdots, H_l-\delta_{l, i})$ and $H_{l+1}=H_0:=0$.

%In \cite{N}, the author determined  $\H(\g)$ for all Lie algebras of type $A_l$. The results can be expressed as follows, which looks slightly different.
%\begin{theorem}\label{THM2}
%Let $\g$ be a finite dimensional simple Lie algebra of type $A_l$. Then
%\[
%\H(\g)=\{M(\a,b,S)|a\in(\C^*)^l,b\in\C,S\subseteq\{1,\cdots,l+1\}\}.
%\]
%Moreover, $M(\a,b,S)$ is simple if and only if $(l+1)b\not\in\Z$ or $S\neq\varnothing$ or $S\neq\{1,\cdots,l+1\}$.
%\end{theorem}

Recently, Nilsson proved that for a finite dimensional simple Kac-Moody algebra $\g$, $\H(\g)$ is empty except for $\g$ being of type $A_l$ and $C_l$, see \cite{N1}. Similar work for Witt algebras of all ranks was done in \cite{TZ}. Such modules for Heisenberg-Virasoro algebra and $W(2,2)$ were determined   in \cite{CG}. In \cite{CZ},   this kind of modules for basic Lie superalgebras  were determined. In the present paper, we focus on determining the module category $\H(\g)$ for all Kac-Moody algebras $\g$.

The paper is organized as follows. In section 2, we collect some definitions, notations and establish some preliminary  lemmas. The modules in $\H(\g)$  for affine Kac-Moody algebras $\g$ are determined at Theorem 3 in Section 3 while the modules in $\H(\g)$  for Kac-Moody algebras $\g$  of rank 2 are completely obtained at Theorem 11 in Section 4.  In Section 5 using our different approach  we recover Nilsson's result in \cite{N1} as a by-product, i.e., give all modules in $\H(\g)$  for Kac-Moody algebras of finite type.
Finally, in Section 6, we obtain all modules in $\H(\g)$  for general Kac-Moody algebras in Theorem 20.

\section{Preliminaries}

Let us first recall some notations from \cite{Ka}. By a {\emph {generalized Cartan matrix}} we mean a square integer matrix $A=(a_{ij})_{i,j\in\l}$ where $ \l=\{1,2,\cdots, l\}$ with the conditions
\begin{itemize}
\item $a_{ii}=2,i\in \l$;
\item $a_{ij}\leq0$ if $i\neq j$, and
\item $a_{ij}=0\Leftrightarrow a_{ji}=0$.
\end{itemize}
In this paper, we consider the case that $A$ is indecomposable, i.e. there is no partition $\l=\l_1\cup\l_2$  so that $a_{ij}=0$ whenever $i\in\l_1$ and $j\in\l_2$.
%For a real column vector $u=(u_1,u_2,\cdots, u_l)^T$, we write $u>0$ if all $u_i>0$, and $u\geq0$ if all $u_i\geq0$. From \cite{Ka}, we have
%\begin{theorem}[Theorem 4.3 in \cite{Ka}]\label{THM3}
%Let $A$ be an $l\times l$ indecomposable generalized Cartan matrix. Then one and only one of the following three possibilities holds for both $A$ and $A^T$:
%\begin{itemize}
%\item[(Fin)] $\det A\neq0$; there exists $u>0$ such that $Au>0$; $Av\geq0$ implies $v>0$ or $v=0$;
%\item[(Aff)] $\mathrm{corank}A=1$; there exists $u>0$ such that $Au=0$; $Av\geq0$ implies $Av=0$;
%\item[(Ind)] there exists $u>0$ such that $Au<0$; $Av\geq0,v\geq0$ imply $v=0$.
%\end{itemize}
%\end{theorem}
%Referring to cases (Fin), (Aff), or (Ind), we shall say
From \cite{Ka}, we know that a generalized Cartan matrix $A$ is one of the three types:  {\emph {finite}}, {\emph {affine}}, or {\emph {indefinite type}}.

The \emph{Dynkin diagram} $\Gamma(A)$ of a generalized Cartan matrix $A=(a_{ij})_{l\times l}$ is the graph with $l$ vertices numbered with $1, 2, \cdots,l$. If $i\neq j$ and $a_{ij}a_{ji}\leq4$ then we connect $i$ and $j$ with $\max(|a_{ij}|,|a_{ji}|)$ edges together with an arrow towards $i$ if $|a_{ij}|>1$. If $a_{ij}a_{ji}>4$ we draw a bold face line segment between $i$ and $j$ labeled with the ordered pair $(|a_{ij}|,|a_{ji}|)$.

Let $\langle\cdot,\cdot\rangle: \h\times\h^*\rightarrow\C$ denote the natural nondegenerate bilinear form pairing between a vector space $\h$ and its dual. Given:
\begin{itemize}
\item a generalized Cartan matrix $A=(a_{ij})_{i,j\in \l},$
\item a finite dimensional vector space $\h$ (Cartan subalgebra) with $\dim\h=2l-\mathrm{rank}A$, and
\item a choice of {\emph{simple roots}} $\Pi=\{\alpha_1,\cdots,\alpha_l\}\subseteq\h^*$ and \emph{simple coroots} $\Pi^\vee=\{\alpha_1^\vee,\cdots,\alpha_l^\vee\}\subseteq\h$ such that $\Pi$ and $\Pi^\vee$ are linearly independent with $\langle\alpha_j,\alpha_i^\vee\rangle=\alpha_j(\alpha_i^\vee)=a_{ij},i,j\in\l$,
\end{itemize}
we may associated to $A$ a Lie algebra $\g=\g(A)=\tilde{\g}(A)/\mathfrak{t}$ over a field $K$, called the \emph{Kac-Moody algebra}, where $\tilde{\g}(A)$ is the auxiliary Lie algebra generated by $\h$ and elements $\{e_i,f_i|i\in \l\}$ subject to relations:
\begin{eqnarray}\label{R2.1}\begin{aligned}
&[\h,\h]=0,\\
&[h,e_i]=\alpha_i(h)e_i, h\in\h,\\
&[h,f_i]=-\alpha_i(h)f_i, h\in\h,\\
&[e_i,f_j]=\delta_{ij}\alpha_i^\vee,\\
%&(\mathrm{ad}e_i)^{1-a_{ij}}(e_j)=(\mathrm{ad}f_i)^{1-a_{ij}}(f_j)=0, i\neq j.
\end{aligned}\end{eqnarray}
and $\mathfrak{t}$ is the unique maximal ideal in $\tilde{\g}(A)$ that intersects $\h$ trivially (see Theorem 1.2 in \cite{Ka}). Gabber and Kac proved that for a symmetrizable generalized Cartan matrix $A$, in particular for $A$ of finite or affine type, $\g(A)$ is the Lie algebra generated by $\h$ and elements $\{e_i,f_i|i\in \l\}$ subject to relations (\ref{R2.1}) and (\cite{Ka}, Theorem 9.11):
\begin{eqnarray}\label{R2.2}
(\mathrm{ad}e_i)^{1-a_{ij}}(e_j)=(\mathrm{ad}f_i)^{1-a_{ij}}(f_j)=0, \forall i\neq j.
\end{eqnarray}

Let $\g$ be a Kac-Moody algebra over $\C$. The goal of this paper is to study the full subcategory $\H(\g)$ of $\g$-Mod consisting of objects whose restriction to $U(\h)$ are free of rank 1, i.e.
\[
\H(\g)=\{M\in \g\text{-Mod}|\mathrm{Res}_{\h}^{\g}M\cong_{\h}U(\h)\}.
\]
Take a basis  for $\h$ to be $\{H_1,H_2,\cdots,H_{2l-\mathrm{rank}A}\}$ which will be specified later. Since $\h$ is a commutative Lie algebra, we can identify $U(\h)$ with $\cP=\C[H_1,\cdots,H_{2l-\mathrm{rank}A}]$. For each $i\in J=\{1,2,\cdots,2l-\mathrm{rank}A\}$, we denote by
\[
\cP_i=\C[H_1,\cdots,H_{i-1},H_{i+1},\cdots,H_{2l-\mathrm{rank}A}].
\]
Clearly, $U(\h)=\cP_i[H_i],i\in J$. For $g\in U(\h)$, denote by $\deg_i(g)$ the \emph{degree of $g$} as a polynomial in indeterminant $H_i$ over $\cP_i$, and we set $\deg_i(0)=-1$.

Define the automorphisms of $U(\h)$ as follows:
 \[\begin{aligned}\sigma_i:&\ U(\h)\longrightarrow U(\h),\\
 &H_j\mapsto H_j-\delta_{i,j}, \forall j\in J.
 \end{aligned} \]
It is easy to see that $\s_i^{-1}$ is defined by mapping $H_j$ to $H_j+\delta_{i,j}$, and $\s_i\s_j=\s_j\s_i$ for all $i,j\in J$. It is easy to see the following fact.
%\begin{lemma}\label{LEM4}
%For $g\in\cP$, we have
%\[
%\deg_i(\s_i-\mathrm{Id})(g)=\deg_ig-1,
%\]
%where $\deg_i0:=-1$.
%\end{lemma}
%From this lemma, we can easily obtain the following corollary.
\begin{lemma}%\label{COR5}
For $g\in\cP,k\in\Z\setminus\{0\}, m\in\Z_+$, we have
\[
\deg_i(\s_i^k-\mathrm{Id})(g)=\deg_ig-1, \text{ and}
\]

\[
\deg_i(\s_i-\id)^m(g)=\left\{\begin{array}{ll}
\deg_i g-m, &\text{if } m\leq\deg_i g,\\
-1, &\text{if } m>\deg_i g.
\end{array}\right.
\]
\end{lemma}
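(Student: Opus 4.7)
The plan is to prove both statements by writing $g$ in the standard form $g=\sum_{j=0}^{n}p_j H_i^j$ with $p_j\in\cP_i$, $p_n\neq 0$, and $n=\deg_i g$, and then tracking the leading term in $H_i$ under the given operators. The overall strategy reduces the second identity to the first by induction on $m$, so the real content lies in handling $\sigma_i^k-\id$.

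For the first identity, I would compute
\[
(\sigma_i^k-\id)(g)=\sum_{j=0}^{n}p_j\bigl((H_i-k)^j-H_i^j\bigr).
\]
Since $\sigma_i$ acts trivially on $\cP_i$ and shifts $H_i$ by $-k$, the term $(H_i-k)^j-H_i^j$ is a polynomial in $H_i$ of degree exactly $j-1$ with leading coefficient $-jk$. Thus the highest $H_i$-power appearing in the sum comes from $j=n$, giving leading coefficient $-nk\,p_n$. Because $k\neq 0$, $n\geq 0$ could still fail (the case $n=0$ must be dealt with separately: then $g\in\cP_i$ and $(\sigma_i^k-\id)(g)=0$, which is consistent with $\deg_i g-1=-1$). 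For $n\geq 1$, $-nk\,p_n\neq 0$, so $\deg_i(\sigma_i^k-\id)(g)=n-1$, as required.

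For the second identity I would induct on $m$. The case $m=1$ is covered by the first identity with $k=1$. For the inductive step, suppose the claim holds for $m-1$ and $m\leq\deg_i g$. Then $(\sigma_i-\id)^{m-1}(g)$ is nonzero of degree $\deg_i g-m+1\geq 1$, so by the first identity applying $\sigma_i-\id$ one more time drops the degree by $1$. If $m=\deg_i g+1$, then $(\sigma_i-\id)^{m-1}(g)$ has degree $0$, i.e.\ lies in $\cP_i\setminus\{0\}$, and since $\sigma_i$ fixes $\cP_i$ pointwise, applying $\sigma_i-\id$ yields $0$, whose degree is $-1$. For $m>\deg_i g+1$ the result remains $0$.

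The only mild obstacle is bookkeeping around the convention $\deg_i(0)=-1$, which needs separate treatment precisely at the transition $m=\deg_i g+1$; once this boundary case is verified, the rest is a clean induction.
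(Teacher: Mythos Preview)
Your argument is correct and is exactly the natural approach; the paper itself does not supply a proof for this lemma, merely introducing it with ``It is easy to see the following fact.'' One small remark: since the paper takes $\Z_+$ to include $0$, you should note that the case $m=0$ is trivial (the operator is the identity), after which your induction from $m=1$ covers everything.
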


For a Kac-Moody algebra $\g$, the following auxiliary algebra will be useful in our later discussions. Let $Z$ be an abelian Lie algebra over $\C$. Then $\tilde{\g}=Z\oplus\g$ is a Lie algebra under the following Lie bracket:
\begin{equation}%\label{R3.1}
[z,\g]=0, \forall z\in Z.
\end{equation}
Let $\tilde{\h}=Z\oplus\h$. Then $\tilde{\h}$ is an abelian Lie algebra and $U(\tilde{\h})
\cong U(Z)\otimes_{\C}U(\h)$. We can naturally extend the actions of $\sigma_i,i\in \l$ on $U(\h)$ to $U(\tilde{\h})$.

Assume that there is a $\tilde{\g}$-module structure on $U(\tilde{\h})$ which is a free $U(\tilde{\h})$-module of rank $1$. Since $\g$ is a subalgebra of $\tilde{\g}$, $U(\tilde{\h})$ is naturally  a $\g$-module. % with action defined by
%$$ x\cdot \phi g=\phi(x\cdot g), \forall x\in \mg, \forall \phi\in U(Z), \forall g\in U(\h).$$
Denote by $\H(\tilde{\mg})$   the full subcategory of $U(\tilde{\mg})$-modules consisting of objects whose restriction to $U(\tilde{\h})$ are free of rank $1$.

\section{Modules for affine Kac-Moody algebras}

In this section we consider $\H(\g)$ for  affine Kac-Moody algebras $\g$. Indeed, we will prove   the following theorem.
\begin{theorem}\label{AFF}
Let $\g$ be an affine Kac-Moody algebra. Then
\[
\H(\g)=\varnothing.
\]
\end{theorem}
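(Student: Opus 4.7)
The plan is to assume, for contradiction, that some $M\in\H(\g)$ exists, and to derive an impossibility from the Heisenberg subalgebra of imaginary root vectors intrinsic to every affine Kac-Moody algebra. Choose a basis $\{H_1,\dots,H_{l+1}\}$ of $\h$ with $H_{l+1}=d$ (the degree derivation) and with the canonical central element $c$ lying in $\span_{\C}\{H_1,\dots,H_l\}$. After identifying $M\cong U(\h)=\cP$, the element $c$ acts on $M$ as multiplication by a nonzero linear polynomial $C\in\C[H_1,\dots,H_l]$ that is independent of $H_{l+1}$.

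Fix $h$ in the underlying finite-type Cartan $\h_0\subset\h$ with $(h|h)\neq 0$, and for each $n\in r\Z\setminus\{0\}$ (with $r=1$ in the untwisted case and $r$ the twist order otherwise) let $x_n=h\otimes t^n\in\g_{n\delta}$. Since the imaginary root $n\delta$ vanishes on $H_1,\dots,H_l$ and takes value $n$ on $d$, the commutation relations $[H_j,A_n]=0$ for $j\leq l$ and $[H_{l+1},A_n]=nA_n$ force the action of $x_n$ on $M$ to be of the form $A_n=p_n\,\s_{l+1}^n$ for some $p_n\in\cP$. Set $D_n:=\deg_{l+1}(p_n)$; note $D_n\geq 0$, because $A_n=0$ together with the Heisenberg relation below would force $C=0$.

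The contradiction comes from playing two bracket relations against each other. The Heisenberg bracket $[x_n,x_{-n}]=n(h|h)\,c$ becomes the operator identity
\begin{equation*}
p_n\,\s_{l+1}^n(p_{-n})-p_{-n}\,\s_{l+1}^{-n}(p_n)=n(h|h)\,C.
\end{equation*}
The leading $H_{l+1}^{D_n+D_{-n}}$-coefficient on the left cancels automatically, while the next coefficient equals $-n(D_n+D_{-n})$ times the product of the leading parts of $p_n$ and $p_{-n}$; since $C$ has degree $0$ in $H_{l+1}$, this forces $D_n+D_{-n}=1$. On the other hand, the vanishing bracket $[x_n,x_m]=0$ for $n,m\in r\Z$ with $n\neq m$ and $n+m\neq 0$ unfolds into $p_n\,\s_{l+1}^n(p_m)=p_m\,\s_{l+1}^m(p_n)$, and a routine case analysis rules out each of $(D_n,D_m)\in\{(1,0),(0,1),(1,1)\}$ (producing identities such as $mP_mQ_n=0$ or $(m-n)Q_nQ_m=0$ with nonzero factors in the polynomial domain $\C[H_1,\dots,H_l]$), leaving $D_n=D_m=0$ as the only possibility. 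Since there are infinitely many such $n,m$, this yields $D_n=0$ for every nonzero $n\in r\Z$, contradicting $D_n+D_{-n}=1$.

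The only technical point to watch is the uniform treatment of twisted and untwisted affine types: in the twisted case $h\otimes t^n$ belongs to $\g$ only for $r\mid n$, but restricting the entire argument to $n\in r\Z\setminus\{0\}$ is harmless, since that set is still infinite in both directions and carries the same Heisenberg and commuting relations.
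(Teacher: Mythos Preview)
Your proof is correct and follows essentially the same route as the paper: both exploit the Heisenberg subalgebra $\{h\otimes t^n\}$ in the imaginary direction, using $[x_n,x_{-n}]=n(h|h)c$ to force $D_n+D_{-n}=1$ and then the commutativity $[x_n,x_m]=0$ for $n+m\neq 0$ to produce the contradiction. The only cosmetic difference is that the paper first reduces twisted types to the untwisted case (via the type-1 subalgebra with the same Cartan) and then picks two indices of $d$-degree $1$ directly, whereas you handle all types uniformly over $r\Z$ and eliminate the cases $(1,0),(0,1),(1,1)$ to force every $D_n=0$; both arrive at the same impossibility.
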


In \cite{Ka}, the author classified all generalized Cartan matrices of affine type:
\begin{itemize}
\item[(Aff1)] $A_1^{(1)}, A_l^{(1)}(l\geq2), B_l^{(1)}(l\geq3), C_l^{(1)}(l\geq2),D_l^{(1)}(l\geq4), E_l^{(1)}(l=6,7,8)$;
\item[(Aff2)] $A_2^{(2)},A_{2l}^{(2)}(l\geq2),A_{2l-1}^{(2)}(l\geq3),D_{l+1}^{(2)}(l\geq2),E_6^{(2)}$;
\item[(Aff3)] $D_4^{(3)}$.
\end{itemize}
From the realization of affine Kac-Moody algebras in \cite{Ka}, we  know that  any Kac-Moody algebra $\g$ of affine type 2 or 3  contains an affine type 1 subalgebra which has the same Cartan subalgebra with $\g$. Thus we only need to prove Theorem \ref{AFF} for   affine type 1 Kac-Moody algebras $\g$.

Let $\g$ be a Kac-Moody algebra of affine type 1. Then
\[
\g\cong\dot{\g}\otimes\C[t^{\pm1}]\oplus\C K\oplus\C d,
\]
with $\dot{\g}$ a finite-dimensional Kac-Moody algebra over $\C$ and
\[\begin{aligned}
&[x\otimes t^{k_1}+\lambda_1 K+\mu_1 d,y\otimes t^{k_2}+\lambda_2 K+\mu_2 d]\\
=&[x,y]\otimes t^{k_1+k_2}+\mu_1k_2y\otimes t^{k_2}-\mu_2k_1x\otimes t^{k_1}+k_1\delta_{k_1,-k_2}(x,y)K,
\end{aligned}\]
where $x,y\in\dot{\g},\lambda_i,\mu_i\in\C, k_i\in\Z$ and $(\cdot,\cdot)$ is an invariant bilinear form on $\dot{\g}$.

Taking a basis for $\dot\h$ (the Cartan subalgebra of $\dot\g$) to be $h_1,\cdots,h_l$ (Chevally generators), then
\[
\h=\C h_1+\cdots+\C h_l+\C K+\C d,
\]
and we have

\begin{lemma}%\label{LEM4}
Let   $M\in\H(\g)$, and $(h_i\otimes t^k)\cdot1=R_{ik}\in M$. Then
\[
(h_i\otimes t^k)\cdot g=\tau^k(g)R_{ik}, \forall g\in M,
\]
where $\tau$ is the automorphism on $\C[h_1,\cdots,h_l,K,d]$ defined by mapping $d$ to $d-1$ with all other variables unchanged.
\end{lemma}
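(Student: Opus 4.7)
The plan is to exploit the fact that $h_i \otimes t^k$ has very simple commutation relations with the generators $h_1, \ldots, h_l, K, d$ of $\h$, and then use freeness of $M$ as a $U(\h)$-module of rank one to transfer the computation to the element $1 \in M$.

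First I would record the three commutators needed. From the bracket formula displayed just before the lemma, taking $x = 0, \mu_1 = 0$ (and reading off $[h_j, h_i \otimes t^k]$), taking the central generator $K$, and taking $\mu_1 = 1, k_1 = 0$ (to pick out $d$), one obtains
\[
[h_j, h_i \otimes t^k] = 0, \qquad [K, h_i \otimes t^k] = 0, \qquad [d, h_i \otimes t^k] = k(h_i \otimes t^k).
\]
Equivalently, inside $U(\g)$,
\[
(h_i \otimes t^k)\, h_j = h_j\,(h_i \otimes t^k), \quad (h_i \otimes t^k)\, K = K\,(h_i \otimes t^k), \quad (h_i \otimes t^k)\, d = (d-k)(h_i \otimes t^k).
\]

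Next, a straightforward induction on the total degree of a monomial in $h_1, \ldots, h_l, K, d$ (the degree-one case being the three identities above, and the inductive step using $(h_i \otimes t^k)(xy) = ((h_i \otimes t^k) x) y$ and the induction hypothesis applied twice) yields the key identity
\[
(h_i \otimes t^k)\, g \;=\; \tau^k(g)\,(h_i \otimes t^k) \qquad \text{in } U(\g), \quad \text{for every } g \in U(\h),
\]
where $\tau$ is the automorphism of $U(\h) = \C[h_1,\ldots,h_l,K,d]$ sending $d \mapsto d-1$ and fixing $h_1,\ldots,h_l,K$.

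Finally, since $M \in \H(\g)$ is free of rank $1$ over $U(\h)$ with generator $1$, every element of $M$ has the form $g \cdot 1$ for a unique $g \in U(\h)$, and we compute
\[
(h_i \otimes t^k) \cdot (g \cdot 1) \;=\; \bigl((h_i \otimes t^k)\, g\bigr) \cdot 1 \;=\; \bigl(\tau^k(g)\,(h_i \otimes t^k)\bigr) \cdot 1 \;=\; \tau^k(g) \cdot R_{ik},
\]
which is the desired formula. No part of this argument is really an obstacle; the only care required is to verify the three commutator identities correctly from the bracket convention of the excerpt, and to keep the direction of the shift in $\tau$ consistent (note the sign in $(h_i \otimes t^k)\, d = (d - k)(h_i \otimes t^k)$ is what forces $d \mapsto d-1$ rather than $d \mapsto d+1$).
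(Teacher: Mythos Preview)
Your proof is correct and follows essentially the same approach as the paper's: the paper's proof simply records the commutators $[h_i\otimes t^k,h_j]=[h_i\otimes t^k,K]=0$ and $[d,h_i\otimes t^k]=k\,h_i\otimes t^k$ and declares the result to follow, while you have spelled out explicitly how these identities propagate through $U(\h)$ and how freeness of $M$ finishes the argument.
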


\begin{proof}
This follows from
\[
[h_i\otimes t^k, h_j]=[h_i\otimes t^k, K]=0, \forall i,j,k,
\]
and
\[
[d,h_i\otimes t^k]=kh_i\otimes t^k, \forall i,k.\qedhere
\]
\end{proof}

Now we can prove the following lemma,

\begin{lemma}%\label{LEM5}
Let $\g$ be an affine type 1 Kac-Moody algebra. Then
\[
\H(\g)=\varnothing.
\]
\end{lemma}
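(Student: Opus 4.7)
I would argue by contradiction. Assume $M\in\H(\g)$ and identify $M$ with $U(\h)=\C[h_1,\ldots,h_l,K,d]$. By the previous lemma, $h_i\otimes t^k$ acts on $M$ by $g\mapsto\tau^k(g)R_{ik}$, where $R_{ik}=(h_i\otimes t^k)\cdot 1$ and $\tau$ is the shift $d\mapsto d-1$. Translating the affine Heisenberg relations $[h_i\otimes t^{k_1},h_j\otimes t^{k_2}]=k_1\delta_{k_1,-k_2}(h_i,h_j)K$ through this action and applying the result to $1\in M$ gives the polynomial identities
\[
\tau^{k_1}(R_{j,k_2})R_{i,k_1}-\tau^{k_2}(R_{i,k_1})R_{j,k_2}=k_1\delta_{k_1,-k_2}(h_i,h_j)K
\]
in $U(\h)$, valid for all $1\leq i,j\leq l$ and all $k_1,k_2\in\Z$.

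Specializing to $j=i$ and $k_2=-k_1=-k$, the right-hand side $k(h_i,h_i)K$ is a nonzero polynomial of $d$-degree $0$, since $(h_i,h_i)\neq 0$ (the invariant form on $\dot\h$ is nondegenerate) and $K$ is a free polynomial generator. In particular $R_{i,\pm k}\neq 0$ for every $k\neq 0$. Writing $R_{i,k}=c_k d^{d_k}+\text{lower order in }d$ with coefficients in $\C[h_1,\ldots,h_l,K]$ and expanding, the coefficient of $d^{d_k+d_{-k}}$ cancels automatically while the coefficient of $d^{d_k+d_{-k}-1}$ comes out to $-k c_k c_{-k}(d_k+d_{-k})$. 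Since the right-hand side has $d$-degree $0$ and $c_k c_{-k}\neq 0$ in the integral domain $\C[h_1,\ldots,h_l,K]$, this forces $d_k+d_{-k}=1$ for every $k\neq 0$.

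By the symmetry $k\leftrightarrow -k$ I may assume $d_1=1$, i.e.\ $R_{i,1}=\alpha d+\beta$ with $\alpha\neq 0$. For $k\neq 0,-1$ the commutator $[h_i\otimes t,h_i\otimes t^k]=0$ yields $\tau(R_{i,k})R_{i,1}=\tau^k(R_{i,1})R_{i,k}$, and comparing the coefficient of $d^{d_k}$ on both sides (with a separate check ruling out $d_k=0$) gives $d_k=k$. Specializing to $k=2$ produces $d_2=2$, which directly contradicts $d_2+d_{-2}=1$. The only delicate step is the two-term leading-coefficient bookkeeping needed for $d_k+d_{-k}=1$; once that identity is in hand, the final contradiction is immediate.
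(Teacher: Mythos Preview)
Your proof is correct and follows the same strategy as the paper: both extract from the Heisenberg relations the constraint $\deg_d R_{i,k}+\deg_d R_{i,-k}=1$, then reach a contradiction by examining a commutator $[h_i\otimes t^{k_1},h_i\otimes t^{k_2}]\cdot 1$ with $k_1+k_2\neq 0$. The only variation is cosmetic: the paper directly picks two indices $k,j$ with $\deg_d R_{1,k}=\deg_d R_{1,j}=1$ and reads off a nonzero linear term in $d$, while you fix one index at $1$ and encode the same leading-coefficient identity as ``$d_k=k$'' before colliding with $d_2+d_{-2}=1$.
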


\begin{proof}
Assume that $\H(\g)\neq\varnothing$ and let $M\in\H(\g)$. On $M\cong\C[h_1,\cdots,h_l,K,\\d]$, we have
\[\begin{aligned}
2kK&=2(kK)\cdot1\\
&=[h_1\otimes t^k,h_1\otimes t^{-k}]\cdot1\\
&=R_{1k}\tau^{k}(R_{1,-k})-R_{1,-k}\tau^{-k}(R_{1k})\\
&=\tau^{k}(R_{1,-k}\tau^{-k}(R_{1k}))-R_{1,-k}\tau^{-k}(R_{1k})\\
&=(\tau^{k}-1)(R_{1,-k}\tau^{-k}(R_{1k})),
\end{aligned}\]
where $i=1,2,\cdots, l$ and $k\in\Z$. By Lemma 2, we have
\[
R_{1,-k}\tau^{-k}(R_{1k})=-Kd+R_k,
\] with $\deg_dR_k=0.$
Therefore, either $\deg_dR_{1k}$ or $\deg_dR_{1,-k}$ is 1.

Take $k\neq j\in\Z$ such that $k+j\neq0$ and $\deg_dR_{1k}\deg_dR_{1j}=1$, say
\[
R_{1k}=a_kd+b_k, R_{1j}=a_jd+b_j,
\]
where $\deg_da_k=\deg_db_k=\deg_da_j=\deg_db_j=0$ and $a_ka_j\neq0$. Then
\[\begin{aligned}
0&=[h_1\otimes t^k, h_1\otimes t^j]\cdot1\\
&=R_{1k}\tau^k(R_{1j})-R_{1j}\tau^j(R_{1k})\\
&=a_ka_j(j-k)d+jb_ja_k-kb_ka_j
&\neq0,
\end{aligned}\]
which is impossible. Thus, $\H(\g)=\varnothing.$
\end{proof}

From the above lemma we know that Theorem \ref{AFF} holds.

\section{Modules for Kac-Moody algebras of rank 2}
In this section, we will study the category $\H(\g)$ for a Kac-Moody algebra $\g$ associated to generalized Cartan matrix $A(r,s)=(a_{ij})_{1\leq i,j\leq2}=\left(\begin{smallmatrix}2 & -r \\ -s & 2\end{smallmatrix}\right)$. % with    positive integers $r\leq s$.
Indeed, we will show that
\begin{proposition}%\label{RANK2}
Let $\g$ be a Kac-Moody algebra associated to generalized Cartan matrix $A(r,s)$. Then $\H(\g)\neq\varnothing$ if and only if $rs\leq2$.
\end{proposition}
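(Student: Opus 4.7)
The proof naturally splits into sufficiency and necessity. For sufficiency, observe that when $rs\le 2$ the algebra $\g$ is of finite type: $A_1\oplus A_1$ for $r=s=0$, type $A_2$ for $r=s=1$, and type $B_2\cong C_2$ when $\{r,s\}=\{1,2\}$. The first case is handled by tensoring two rank-one free $\mathfrak{sl}_2$-modules (Example~1 with $l=1$) on $\C[H_1]\otimes\C[H_2]\cong U(\h)$; the second case is Example~1 with $l=2$; and for $B_2\cong C_2$ one writes down the analogous explicit rank-one free module, a special case of Nilsson's construction for $\mathfrak{sp}_{2l}$ in \cite{N1}, and checks the relations directly.

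For necessity, suppose $M\in\H(\g)$. The affine case $rs=4$ is immediate from Theorem~\ref{AFF}: for $r=s=2$ one has $\g\cong A_1^{(1)}$, and for $(r,s)\in\{(1,4),(4,1)\}$ the algebra is of type $A_2^{(2)}$ and contains an $A_1^{(1)}$ subalgebra with the same Cartan, so $\H(\g)=\varnothing$. For $rs\ne 4$ one has $\det A(r,s)=4-rs\ne 0$, so $\dim\h=2$ and I take $H_1=\alpha_1^\vee,H_2=\alpha_2^\vee$. Then $M\cong\C[H_1,H_2]$ as $U(\h)$-module, and the relations $[h,e_i]=\alpha_i(h)e_i$, $[h,f_i]=-\alpha_i(h)f_i$ combined with freeness of $M$ over $U(\h)$ force
\[
e_i\cdot g=R_i\,\tau_i(g),\qquad f_i\cdot g=S_i\,\tau_i^{-1}(g),
\]
for all $g\in U(\h)$, where $R_i=e_i\cdot 1$, $S_i=f_i\cdot 1\in U(\h)$ and $\tau_i$ is the algebra automorphism of $U(\h)$ determined by $\tau_i(H_j)=H_j-\alpha_i(H_j)$; explicitly, $\tau_1\colon(H_1,H_2)\mapsto(H_1-2,H_2+s)$ and $\tau_2\colon(H_1,H_2)\mapsto(H_1+r,H_2-2)$.

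The next step is to translate the remaining defining relations of $\g$ into polynomial identities for $R_i,S_i\in\C[H_1,H_2]$. The relation $[e_i,f_i]=\alpha_i^\vee=H_i$ gives $R_i\tau_i(S_i)-S_i\tau_i^{-1}(R_i)=H_i$, which forces $R_i,S_i\ne 0$. The relation $[e_1,f_2]=0$ gives $R_1\tau_1(S_2)=S_2\tau_2^{-1}(R_1)$ and the symmetric identity for $(R_2,S_1)$, imposing rigid factorization constraints on these pairs. Finally, the Serre relations $(\mathrm{ad}\,e_1)^{1+r}(e_2)=0$ and $(\mathrm{ad}\,e_2)^{1+s}(e_1)=0$ (valid by Gabber-Kac, since $A(r,s)$ is symmetrizable), expanded via the binomial formula for the iterated commutator, unfold into identities of the shape
\[
\sum_{k=0}^{1+r}(-1)^k\binom{1+r}{k}R_1^{(1+r-k)}\,\tau_1^{1+r-k}(R_2)\,\tau_1^{1+r-k}\tau_2(R_1^{(k)})=0,
\]
where $R_1^{(j)}=R_1\tau_1(R_1)\cdots\tau_1^{j-1}(R_1)$, together with the identity obtained by swapping $1\leftrightarrow 2$ and the pair coming from the $f_i$ Serre relations.

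It remains to show that for $rs=3$ (type $G_2$) and for every $rs\ge 5$ (the indefinite hyperbolic range), these identities admit no nonzero solution. The plan is to use the factorization forced by $[e_1,f_2]=0$ to parametrize $R_1,S_2$ as products of shifted linear factors in $H_1,H_2$, substitute into the Serre identities, and compare the leading monomials in a suitable total-degree ordering on $\C[H_1,H_2]$, with Lemma~1 controlling the degree drop under iterates of $\sigma_i-\mathrm{Id}$. The main obstacle is precisely this last step: the Serre identities are long, twisted by the noncommuting shifts $\tau_i$, and one needs a uniform argument covering $G_2$ along with infinitely many indefinite cases. A natural way forward is to observe that the dominant $H_1$-degree (respectively $H_2$-degree) contributed by the first Serre identity grows linearly in $r$ (respectively $s$), and forcing compatibility with the size-one right-hand side of $R_i\tau_i(S_i)-S_i\tau_i^{-1}(R_i)=H_i$ should pin the threshold at $rs\le 2$.
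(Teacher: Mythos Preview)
Your proposal has a genuine gap: the necessity direction is not proved. You explicitly say ``the main obstacle is precisely this last step'' and then offer only a heuristic about degree growth. The claim that $[e_1,f_2]=0$ forces $R_1,S_2$ to be ``products of shifted linear factors'' is not justified---the functional equation $R_1\,\tau_1(S_2)=S_2\,\tau_2^{-1}(R_1)$ in $\C[H_1,H_2]$ does not by itself yield a factorization into linear pieces, and your subsequent degree argument is never carried out for $rs=3$ or for any $rs\ge 5$. So as written the proof covers sufficiency and the affine case but leaves the indefinite range and $G_2$ entirely open. (A minor point: the case $r=s=0$ is excluded by the standing assumption that $A$ is indecomposable, so $rs\ge 1$ throughout.)

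The paper closes this gap with a structural shortcut you are missing. Instead of attacking the full Serre relations head-on, it first restricts $M$ to each rank-one subalgebra $\C e_i+\C f_i+\C\alpha_i^\vee$ together with the rest of $\h$, obtaining an object in $\H(\mathfrak{sl}_2\oplus Z)$ for an abelian $Z$. Lemma~\ref{A_1} (which is Nilsson's $\mathfrak{sl}_2$ classification extended so that the parameter $b$ may lie in $U(Z)$, proved by base change to the algebraic closure of the fraction field of $U(Z)$) then pins each pair $(E_i,F_i)$ down to one of three explicit forms, with one free parameter $b\in\C[H_{3-i}]$ and one nonzero scalar. This reduces the problem to a finite list of candidate pairs, and the relations $[e_1,f_2]=0$, $[e_2,f_1]=0$ together with a single Serre relation then eliminate everything except $rs\in\{1,2\}$ by straightforward (if tedious) polynomial comparisons. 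Working in the dual basis $(H_1,H_2)^T=A^{-1}(\alpha_1^\vee,\alpha_2^\vee)^T$, where the shifts act by $\sigma_i(H_j)=H_j-\delta_{ij}$, makes these computations tractable; your coroot basis $H_i=\alpha_i^\vee$ mixes both variables under each $\tau_i$ and would make the remaining case analysis substantially harder even once the $\mathfrak{sl}_2$ reduction is in place.
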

Since $A$ is always symmetrizable in this case, we may use relations (\ref{R2.2}) in our proofs. When $A$ is not invertible, then $rs=4$ and $\g$ is an affine Kac-Moody algebra, whose modules have been studied in Section 3. We may assume in our discussion that $rs\neq4$. For $r=s=1$, $\g$ is a finite dimensional simple Lie algebra of type $A_2$, the result for this algebra is a special case in Example 1. So, we may further assume that $rs\neq1$. % and therefore $s>1$.

Since $rs\neq4$, $A$ is invertible. We may take a basis for $\h$ to be
\[
(H_1,H_2)^{T}=A^{-1}(\alpha_1^\vee,\alpha_2^\vee)^{T}.
\]
Then we have
\[\begin{aligned}
&[H_i,e_j]=\d_{ij}e_j, [H_i,f_j]=-\d_{ij}f_j,\,\,i,j=1,2;\\
&[e_1,f_1]=2H_1-rH_2,\\
&[e_2,f_2]=-sH_1+2H_2,\\
%([e_1,f_1], [e_2,f_2])^{T}=(\alpha_1^\vee,\alpha_2^\vee)^{T}=(2H_1-rH_2,-sH_1+2H_2)^{T},\\
&[e_i,f_j]=0, i\neq j,\\
&(\mathrm{ad}e_1)^{1+r}(e_2)=(\mathrm{ad}f_1)^{1+r}(f_2)=0,\\
&(\mathrm{ad}e_2)^{1+s}(e_1)=(\mathrm{ad}f_2)^{1+s}(f_1)=0.
\end{aligned}
\]
From the first two relations, we have
\begin{lemma}%\label{GEN}
Let  $M\in\H(\g)$. For any $g\in\C[H_1,H_2]\cong M$ and $ i\in\{1,2\}$, we have
\[
\begin{aligned}
&H_i\cdot g=H_ig,\\
&e_i\cdot g=\S_i\s_i(g),\\
&f_i\cdot g=\P_i\s_i^{-1}(g),
\end{aligned}
\]
where $\S_i=e_i\cdot 1,\P_i=f_i\cdot 1\in\C[H_1,H_2]\setminus\{0\}$.
\end{lemma}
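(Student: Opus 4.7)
The plan is to exploit two ingredients: the hypothesis that $M$ is free of rank $1$ over $U(\h)$ with cyclic generator $1$, and the commutation relations $[H_i,e_j]=\delta_{ij}e_j$, $[H_i,f_j]=-\delta_{ij}f_j$ which come from the defining relations (\ref{R2.1}) once we rewrite them in the basis $\{H_1,H_2\}$ dual (via $A^{-1}$) to $\{\alpha_1^\vee,\alpha_2^\vee\}$.

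First, since $M\cong_{\h}U(\h)$ as $U(\h)$-modules with generator $1$, the identification $M=\C[H_1,H_2]$ makes every $H_i$ act by multiplication, i.e.\ $H_i\cdot g=H_ig$ for all $g\in\C[H_1,H_2]$. This is the first formula.

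Next, I would translate the relation $[H_i,e_j]=\delta_{ij}e_j$ into $e_jH_i=(H_i-\delta_{ij})e_j$ inside $U(\g)$. Iterating (and using that $H_1,H_2$ commute) yields for every polynomial $g(H_1,H_2)$
\[
e_j\,g(H_1,H_2)\;=\;g(H_1-\delta_{1j},H_2-\delta_{2j})\,e_j\;=\;\s_j(g)\,e_j
\]
as identities in $U(\g)$. Applying this to the cyclic vector $1\in M$ gives
\[
e_j\cdot g\;=\;(e_j g)\cdot 1\;=\;\s_j(g)\cdot(e_j\cdot 1)\;=\;\s_j(g)\,\S_j\;=\;\S_j\,\s_j(g),
\]
where the last equality uses that $\s_j(g)\in\C[H_1,H_2]$ acts by multiplication and hence commutes with $\S_j\in M=\C[H_1,H_2]$. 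The same argument, starting from $[H_i,f_j]=-\delta_{ij}f_j$, produces $f_j g=\s_j^{-1}(g) f_j$ in $U(\g)$, hence $f_i\cdot g=\P_i\s_i^{-1}(g)$.

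The one remaining point, and the only step that needs an actual argument rather than a formal manipulation, is to show $\S_i,\P_i\neq 0$. Here I would use the Chevalley relation $[e_i,f_i]=\alpha_i^\vee$, which under our change of basis reads $[e_1,f_1]=2H_1-rH_2$ and $[e_2,f_2]=-sH_1+2H_2$, both nonzero polynomials in $\C[H_1,H_2]$. If $\S_i=e_i\cdot 1=0$, then by the shift formula just proved $e_i\cdot\P_i=\s_i(\P_i)\S_i=0$, so
\[
[e_i,f_i]\cdot 1\;=\;e_i\cdot(f_i\cdot 1)-f_i\cdot(e_i\cdot 1)\;=\;e_i\cdot\P_i-0\;=\;0,
\]
contradicting the fact that $[e_i,f_i]\cdot 1$ equals the nonzero polynomial $2H_i-\cdots$. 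The case $\P_i=0$ is symmetric, using $f_i\cdot\S_i=\s_i^{-1}(\S_i)\P_i=0$. This rules out both possibilities and completes the proof. I expect no genuine obstacle here: everything is driven by the freeness hypothesis together with the weight relations on the Chevalley generators, and the only place where the rank-$2$ data enters is in checking that $[e_i,f_i]\cdot 1\neq 0$, which is immediate from linear independence of $H_1,H_2$.
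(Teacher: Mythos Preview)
Your proof is correct and follows the same approach the paper indicates: the paper merely says ``From the first two relations, we have'' before stating the lemma, so it is invoking exactly the commutation relations $[H_i,e_j]=\delta_{ij}e_j$, $[H_i,f_j]=-\delta_{ij}f_j$ that you unpack in detail. Your argument for the nonvanishing of $\S_i,\P_i$ via $[e_i,f_i]\cdot 1=\alpha_i^\vee\neq 0$ is the natural one and fills in a point the paper leaves implicit.
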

Thus, to understand $M\in \H(\g)$, we only need to determine all possible $4$-tuples $(\S_1,\S_2,\P_1,\P_2)$.

Next, we will establish some useful lemmas for our later discussions.

\begin{lemma}\label{A_1}
Let $\g$ be the Lie algebra $\mathfrak{sl}_2(\C)$ and $Z$ be an abelian Lie algebra over $\C$. For any $a\in\C^*,b\in U(Z),S\subseteq\{1,2\}$,   the vector space $M(a,b,S)=U(\tilde{\h})$ becomes a $\tilde{\g}$-module with the following action for all $ \tilde{g}\in U(\tilde{\h})$:
\[\begin{aligned}
&H\cdot \tilde{g}=H\tilde{g}, z\cdot \tilde{g}=z\tilde{g},\forall z\in Z;\\
&e\cdot \tilde{g}=\left\{\begin{array}{ll}
a(H+b)\s_i(\tilde{g}), &1,2\not\in S\text{ or } 1,2\in S,\\
a\s_i(\tilde{g}), &1\in S,2\not\in S,\\
a(-H^2+H+b)\s_i(\tilde{g}), &1\not\in S, 2\in S;
\end{array}\right.\\
&f\cdot \tilde{g}=\left\{\begin{array}{ll}
a^{-1}(-H+b)\s_i^{-1}(\tilde{g}), &1,2\not\in S\text{ or } 1,2\in S,\\
-a^{-1}(H^2+H+b)\s_i^{-1}(\tilde{g}), &1\in S,2\not\in S,\\
a^{-1}\s_i^{-1}(\tilde{g}), &1\not\in S, 2\in S.
\end{array}\right.
\end{aligned}\]
Furthermore
\[\H(\tilde{\g})=\{M(a,b,S)|a\in\C^*,b\in U(Z),S\subseteq\{1,2\}\}.
\]
\end{lemma}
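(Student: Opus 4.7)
The plan is to mirror the analysis of the preceding lemma (where scalars live in $\C$) but allow the coefficient ring to be $U(Z)$. Write $\sigma$ for the shift $H\mapsto H-1$ on $U(\tilde{\h})$ (fixing $Z$), and set $E:=e\cdot 1$ and $F:=f\cdot 1$ in $U(\tilde{\h})$.

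First I would show that on $M\cong U(\tilde{\h})$ the actions are forced to have the form $e\cdot u = E\,\sigma(u)$ and $f\cdot u = F\,\sigma^{-1}(u)$ for all $u\in U(\tilde{\h})$. This follows exactly as in the preceding lemma: the relation $[H,e]=e$ gives $e\cdot(Hg)=(H-1)(e\cdot g)$, and $[z,e]=0$ for $z\in Z$ gives $e\cdot(zg)=z(e\cdot g)$; together these identify $e$ with left multiplication by $E$ composed with $\sigma$. The formula for $f$ is symmetric, using $[H,f]=-f$ and $[z,f]=0$.

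Second, the remaining Chevalley relation $[e,f]=2H$ (taking $H=H_1$ so that $\alpha_1^{\vee}=2H$) translates into the single polynomial identity
\[
E\,\sigma(F)-F\,\sigma^{-1}(E)=2H
\]
in $U(\tilde{\h})\cong U(Z)[H]$. I would tackle this by degree analysis in $H$. Letting $m=\deg_H E$, $n=\deg_H F$, and $p,q\in U(Z)$ be the corresponding leading coefficients, the $H^{m+n}$ contributions cancel because $U(Z)$ is commutative. A short expansion shows the $H^{m+n-1}$ coefficient equals $-(m+n)pq$. Comparing with $2H$ forces $m+n=2$ and $pq=-1$; since units in the polynomial ring $U(Z)$ are nonzero complex scalars, $p$ and $q$ must lie in $\C^*$.

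Third, the three cases $(m,n)=(2,0),(1,1),(0,2)$ give three subfamilies. In each, substituting the ansatz into the identity above reduces the remaining constraint to a single linear equation that leaves one free parameter $b\in U(Z)$. These three cases match, in turn, the three listed options $S=\{2\}$, $S\in\{\varnothing,\{1,2\}\}$, and $S=\{1\}$ in the lemma, with $a\in\C^*$ coming from the leading coefficient (which explains why the parametrization by $S\subseteq\{1,2\}$ is mildly redundant). Finally, that each $M(a,b,S)$ actually carries a $\tilde{\g}$-module structure is a direct verification of the single identity above in each case. The main obstacle is the bookkeeping of the degree analysis: one must be careful to conclude that leading coefficients are scalars (so the subleading equations can be solved by dividing), and then match the three resulting normal forms to the three prescribed formulas without confusing the shifts $\sigma$ and $\sigma^{-1}$.
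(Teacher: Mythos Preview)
Your approach is correct but genuinely different from the paper's. You carry out a direct, self-contained degree analysis in $U(Z)[H]$: from $E\sigma(F)-F\sigma^{-1}(E)=2H$ you extract $m+n=2$ and $pq=-1$, observe that the units of the polynomial algebra $U(Z)$ are exactly $\C^*$, and then solve the three $(m,n)$ cases by hand. This is essentially Nilsson's $\mathfrak{sl}_2$ computation redone over the coefficient ring $U(Z)$, with the crucial remark that the leading-coefficient equation forces scalars.

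The paper instead avoids redoing any computation: it base-changes to the algebraic closure $\overline{\C(Z)}$ of the fraction field of $U(Z)$, notes that $G=\overline{\C(Z)}\otimes_{\C}\mathfrak{sl}_2$ is $\mathfrak{sl}_2$ over an algebraically closed field, invokes Nilsson's classification there to get parameters $a,b\in\overline{\C(Z)}$, and then descends by observing that $e\cdot 1,f\cdot 1\in U(\tilde\h)$ forces $a,b\in U(Z)$ and $a^{-1}\in U(Z)$, hence $a\in\C^*$. Your route is more elementary and fully self-contained for the rank-one case; the paper's route is more conceptual and, more importantly, scales without change to the higher-rank analogues (their Lemma~\ref{AXLEM} for $A_l$ and $B_2$), where redoing the explicit degree analysis from scratch would be considerably more painful.
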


\begin{proof} Using the formulas after Example 1 for $n=1$ and re-choosing $a_i, b$, we see that the action of $\tilde{\g}$ on $U(\tilde{\h})$  defined above  gives a $\tilde{\g}$-module structure on $U(\tilde{\h})$, that is, $M(a,b, S)\in \H(\tilde{\g})$  for any   $\a\in\C^*,b\in U(Z),S\subseteq\{1,2\}$. So we only need to show that each module in  $\H(\tilde{\g})$ has the above defined structure.

 Let $M=U(\tilde{\h})$ be a module in $\H(\tilde{\g})$. By $[\mg,Z]=0$ and by the choice of the $H$ we see that  the $\tilde{\mg}$-module structure of $M$ is determined by the actions of the Chevalley generators  $e,f$ of $\mg$ on $1.$

Define a  new Lie algebra $\hat{\mg}:=U(Z)\otimes_{\C}\mg\subset U(\tilde{\g})$ over the PID $U(Z)$. Then $U(\tilde{\h})$becomes a $\hat{\mg}$-module over $U(Z)$ where the action of $\hat\mg$ on $U(\tilde{\h})$ inherits the action of  $\tilde{\mg}$  on  $U(\tilde{\h})$.

Now let us extend the  fundamental field. Let $\C(Z)$ be the fraction field of $U(Z)$ and let $\overline{\C(Z)}$ be the algebraically closed extension field of  $\C(Z)$. Let
$$G=\overline{\C(Z)}\otimes_{U(Z)}\hat{\mg}\cong \overline{\C(Z)}\otimes_{\C}\mg,
$$then $G\cong\sl_2(\overline{\C(Z)})$  with a Cartan subalgebra $\mathfrak{H}\cong \overline{\C(Z)}\otimes_{\C}\h$. If we identify $1\otimes \mg\subset G$ with $\mg$, then $(e,f,h)$ are the Chevalley generators for $G$.

Noting that $U(\tilde{\h})\cong U(Z)\otimes_{\C} U(\h)$, we have the vector space isomorphism over $\overline{\C(Z)}$ as follows:
$$\overline{\C(Z)}\otimes_{U(Z)}U(\tilde{\h})\cong \overline{\C(Z)}\otimes_{\C}U(\h)\cong \overline{\C(Z)}[\mathfrak{H}]=U(\mathfrak{H}),
$$
where the last term  is the universal enveloping algebra of $\mathfrak{H}$ over $\overline{\C(Z)}$.
So we can identify $U(\mathfrak{H})$ with $
\overline{\C(Z)}\otimes_{U(Z)}U(\tilde{\h})$. Clearly, it is natural to
extend the module action of $\hat{\mg}$ on $U(\tilde{\h})$ to the module action of $G$ on
$
\overline{\C(Z)}\otimes_{U(Z)}U(\tilde{ \h})$
by
$$\phi_1\otimes \phi_2\otimes x\circ \psi_1\otimes \psi_2 g=\phi_1\psi_1\otimes (\phi_2\otimes x \cdot \psi_2 g),
$$where $\phi_1,\psi_1\in \overline{\C(Z)}$, $\phi_2,\psi_2\in U(Z),$$x\in \mg, g\in U(\h)$.

 Since the action of $\mathfrak{H}$ on $
\overline{\C(Z)}\otimes_{U(Z)}U(\tilde{\h})$ is just multiplication, we see that the obtained $G$-module   $
\overline{\C(Z)}\otimes_{U(Z)}U(\tilde{\h})$ belongs to $\H(G)$, where all the modules are over $\overline{\C(Z)}$. Clearly, the method used to prove Theorems 11 and 30  in \cite{N} is valid for the $G$-module $\overline{\C(Z)}\otimes_{U(Z)}U(\tilde{\h})$, then the result in Example 1 holds for some $a\in \overline{\C(Z)}^*, b\in \overline{\C(Z)}, S\subseteq\{1,2\}$.
 Moreover, since the action of $\mg\subset G$ on $U(\tilde{\h})$ coincides with the action of  $\mg$ on the $\tilde{\mg}$-module $U(\tilde{\h})=M$,  we must have $e\circ 1, f\circ 1\in U(\tilde{\h})$, which forces that $a,b\in U(Z)$. Again by   Example 1 we see that $a^{-1}\in U(Z)$. So $a\in \C^*$ and $M\in\H(\g)$. Therefore, the lemma holds.
\end{proof}

Now consider $M$ as a module over the Lie algebras $$\aligned \C e_1+\C f_1+\C(2H_1-rH_2)+\C H_2, \\ \C e_2+\C f_2+\C(sH_1-2H_2)+\C H_1,\endaligned$$ separately. Using Lemma \ref{A_1}, we know that the only possible pairs $(\S_1,\P_1)$ and $(\S_2,\P_2)$ are
\[\begin{aligned}
(\S_1,\P_1)&=\left\{\begin{array}{l}
\left(a_1(H_1-\frac{r}{2}H_2+b),\,\,\, a_1^{-1}(-H_1+\frac{r}{2}H_2+b)\right),\\
\left(a_1,\,\,\, -a_1^{-1}((H_1-\frac{r}{2}H_2)^2+H_1-\frac{r}{2}H_2+b)\right),\\
\left(a_1(-(H_1-\frac{r}{2}H_2)^2+H_1-\frac{r}{2}H_2+b),\,\,\, a_1^{-1}\right);
\end{array}\right.\\
(\S_2,\P_2)&=\left\{\begin{array}{l}
\left(a_2(H_2-\frac{s}{2}H_1+c),\,\,\, a_2^{-1}(-H_2+\frac{s}{2}H_1+c)\right),\\
\left(a_2,\,\,\, -a_2^{-1}((H_2-\frac{s}{2}H_1)^2+H_2-\frac{s}{2}H_1+c)\right),\\
\left(a_2(-(H_2-\frac{s}{2}H_1)^2+H_2-\frac{s}{2}H_1+c),\,\,\, a_2^{-1}\right);
\end{array}\right.
\end{aligned}\]
where $b\in\C[H_2]$ and $c\in\C[H_1]$.

Before proving our main result in this section, we need to first establish some auxiliaries.

\begin{lemma}%\label{DEG}
Let $\g$ be the Kac-Moody algebra associated to the generalized  Cartan matrix $A(r,s)$ with $rs\neq1,4$. \begin{itemize}\item[(a).] If  $s>1$, then $\deg_1\S_1\deg_1\P_1=0$;\item[(b).]
If  $r>1$, then $\deg_2\S_2\deg_2\P_2=0$.\end{itemize}
\end{lemma}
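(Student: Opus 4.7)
The plan is to reduce (b) to (a) by swapping the roles of the indices $1$ and $2$ (equivalently, $r\leftrightarrow s$), and to prove (a) by contradiction using one of the Serre relations from (\ref{R2.2}). Assume toward a contradiction that both $\deg_1\S_1>0$ and $\deg_1\P_1>0$. Restricting $M$ to the subalgebra $\C e_1+\C f_1+\C(2H_1-rH_2)$ with $\C H_2$ central and applying Lemma \ref{A_1}, of the three displayed forms only the first admits both factors of positive $H_1$-degree, so we may write
\[
\S_1 = a_1\bigl(H_1-\tfrac{r}{2}H_2+b\bigr),\qquad \P_1 = a_1^{-1}\bigl(-H_1+\tfrac{r}{2}H_2+b\bigr),
\]
with $a_1\in\C^*$ and $b\in\C[H_2]$.

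Next I would exploit the Serre relation $(\mathrm{ad}\,e_2)^{1+s}(e_1)=0$, applied to the cyclic vector $1\in M$. Expanding
\[
(\mathrm{ad}\,e_2)^{1+s}(e_1) = \sum_{k=0}^{1+s}(-1)^k\binom{1+s}{k}\,e_2^{1+s-k}\,e_1\,e_2^k
\]
and using the iterated formula $e_i^n\cdot g=\bigl(\prod_{j=0}^{n-1}\sigma_i^j(\S_i)\bigr)\sigma_i^n(g)$ that follows from Lemma 4, each summand becomes an explicit polynomial in $\C[H_1,H_2]$ whose alternating sum must vanish. Since $\sigma_2$ fixes $H_1$ and $\sigma_1$ preserves $H_1$-degree, the $H_1$-top term of each summand is read off from the $H_1$-top terms of $\S_1$ and $\S_2$ together with their $\sigma$-shifts. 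Working through the three sub-cases for $(\S_2,\P_2)$ produced by Lemma \ref{A_1}, the extraction of the top $H_1$-coefficient reduces the Serre identity to a scalar equation whose left-hand side should be a nonzero polynomial in $s$ for $s>1$, yielding the contradiction. In the sub-cases where $\S_2$ or $\P_2$ is a constant, the dual Serre relation $(\mathrm{ad}\,f_2)^{1+s}(f_1)=0$ is used instead, since the non-constant partner then has positive $H_1$-degree of two.

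The main obstacle is the sub-case in which $(\S_2,\P_2)$ is itself of the first form, so that all four of $\S_1,\P_1,\S_2,\P_2$ have positive degree. Then each term $e_2^{1+s-k}e_1 e_2^k\cdot 1$ is a long product of $\sigma$-shifted polynomials and extracting the leading $H_1$-coefficient is genuinely combinatorial. The cleanest route appears to be induction on $k$: setting $X_k:=(\mathrm{ad}\,e_2)^k(e_1)$, one has
\[
X_k\cdot 1 \;=\; e_2\cdot(X_{k-1}\cdot 1) \;-\; X_{k-1}\cdot \S_2,
\]
and tracking the leading $H_1$-term of $X_k\cdot 1$ inductively should express it as a scalar (depending on $s$ through a Pochhammer-type factor) times $(H_1-\tfrac{r}{2}H_2)^{k+1}$ up to $\sigma$-shifts. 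Forcing $X_{1+s}\cdot 1=0$ then demands that this scalar vanish, which in turn forces $s\le 1$, contradicting $s>1$ and completing the argument.
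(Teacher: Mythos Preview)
Your reduction of (b) to (a) and the identification of the first form for $(\S_1,\P_1)$ are fine, but the rest is a plan rather than a proof: the phrases ``should express,'' ``appears to be,'' and ``should demand'' are conjectures you never verify. There is a concrete obstruction in your hard sub-case. When $(\S_2,\P_2)$ is also of the first form, Lemma~\ref{A_1} only gives $c\in\C[H_1]$ with no a~priori bound on $\deg_1 c$. Thus $\deg_1\S_2$ need not equal~$1$, and each application of $e_2$ raises the $H_1$-degree by $\deg_1\S_2$, not by~$1$; your claimed leading term $(H_1-\tfrac{r}{2}H_2)^{k+1}$ for $X_k\cdot 1$ has the wrong degree in general, so the inductive bookkeeping breaks down before it starts. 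You would first need an independent argument bounding $\deg_1 c$ (and symmetrically $\deg_2 b$), and the Serre relation alone does not obviously provide one.

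The paper takes a completely different and much shorter route: it never touches the Serre relations $(\mathrm{ad}\,e_2)^{1+s}(e_1)=0$ here, but uses only the commutation relations $[e_1,f_2]=0$ and $[e_2,f_1]=0$. In the sub-cases where $\S_2$ or $\P_2$ is constant, $[e_2,f_1]\cdot 1=0$ or $[e_1,f_2]\cdot 1=0$ immediately forces $\deg_2\P_1=0$ or $\deg_1\P_2=0$, pinning down $b$, and then the other bracket yields a nonzero $H_2^2$-term with coefficient proportional to $s-1$. In the all-linear sub-case, $[e_1,f_2]\cdot 1=0$ first bounds $\deg_1 c\le 1$ and $\deg_2 b\le 1$; writing $b=b_1H_2+b_2$, $c=c_1H_1+c_2$ and expanding both brackets then gives a small linear system in $b_1,b_2,c_1,c_2$ whose only solutions force $r=s=2$, contradicting $rs\neq 4$. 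The moral is that the mixed relations $[e_i,f_j]=0$ are degree-two identities that control exactly the unknowns $b,c$ you need; the Serre relations are degree $1+s$ and entangle far more data.
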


\begin{proof} It suffices to show (a).
To the contrary, we assume that $$\S_1=a_1(H_1-\frac{r}{2}H_2+b),\,\,\,\, \P_1=a_1^{-1}(-H_1+\frac{r}{2}H_2+b)$$ with $b\in\C[H_2]$. If $\S_2=a_2$ and $\P_2=-a_2^{-1}((H_2-\frac{s}{2}H_1)^2+H_2-\frac{s}{2}H_1+c)$ for some $c\in\C[H_1]$, then
\[\begin{aligned}
0&=[e_2,f_1]\cdot1\\
&=\S_2\s_2(\P_1)-\P_1\s_1^{-1}(\S_2)\\
&=a_2(\s_2(\P_1)-\P_1).
\end{aligned}\]
This implies that $\deg_2\P_1=0$. Therefore $b=\frac{-r}{2}H_2+b_1$ with $b_1\in\C$. Thus,
\[
\S_1=a_1(H_1-rH_2+b_1),\P_1=a_1^{-1}(-H_1+b_1).
\]
Hence,
\[\begin{aligned}
0&=[e_1,f_2]\cdot1\\
&=\S_1\s_1(\P_2)-\P_2\s_2^{-1}(\S_1)\\
&=-a_1a_2^{-1}r(s-1)H_2^2+d_1H_2+d_0,
\end{aligned}\]
where $d_1,d_0$ are in $\C[H_1]$. So, this is a contradiction because $s>1$. Hence, $(\deg_2\S_2,\deg_2\P_2)\neq(0,2)$. Similarly, $(\deg_2\S_2,\deg_2\P_2)\neq(2,0)$.

Now let $\S_2=a_2(H_2-\frac{s}{2}H_1+c),\P_2=-a_2^{-1}(H_2-\frac{s}{2}H_1-c)$ with $c=\sum\limits_{k=0}^m\lambda_kH_1^k\in\C[H_1]$. If $m>1$ and $\lambda_m\ne0$, then
\[\begin{aligned}
0&=[e_1,f_2]\cdot1\\
&=\S_1\s_1(\P_2)-\P_2\s_2^{-1}(\S_1)\\
&=a_1a_2^{-1}\lambda_m(-m+\frac{r}{2}+b-\s_2^{-1}(b))H_1^m+\sum\limits_{k=0}^{m-1}\lambda'_kH_1^k,
\end{aligned}\]
where $\lambda'_k(k<m)$ are in $\C[H_2]$. Hence,
\[
b=-(m-\frac{r}{2})H_2+b_1, b_1\in\C.
\]
Thus,
\[\begin{aligned}
0&=[e_1,f_2]\cdot1&=a_1a_2^{-1}m(1-\frac{s}{2}+\s_1(c)-c)H_2+\lambda''_0,
\end{aligned}\]
where $\lambda''_0\in\C[H_1]$. Therefore, we have
\[
\s_1(c)-c=-(1-\frac{s}{2}).
\]
Hence, $m=\deg_1c\leq1$. This is a contradiction. Thus, $\deg_1c\leq1$. By symmetry we also have  $\deg_2b\leq1$.

Let $b=b_1H_2+b_2,c=c_1H_1+c_2$ with $b_1,b_2,c_1,c_2\in\C$. We see that
$$\aligned \S_1=a_1(H_1+(b_1-\frac{r}{2})H_2+b_2),\,\,\,\,\P_1=a_1^{-1}(-H_1+(b_1+\frac{r}{2})H_2+b_2),\\
\S_2=a_2(H_2+(c_1-\frac{s}{2})H_1+c_2),\,\,\,\,\P_2=-a_2^{-1}(H_2-(c_1+\frac{s}{2})H_1-c_2).\endaligned$$

Then
\[\begin{aligned}
0=&[e_1,f_2]\cdot1\\
=&a_2^{-1}a_1\left[(\frac{-s}{2}-c_1)(b_1+1-\frac{r}{2})H_1+(b_1-\frac{r}{2})(1-\frac{s}{2}-c_1)H_2\right.\\
&\left.-b_2(c_1+\frac{s}{2})-c_2(b_1-\frac{r}{2})\right].
\end{aligned}\]
Therefore, we have
\[
b_1=\frac{r}{2},c_1=\frac{-s}{2},
\]
or
\[
b_1=\frac{r}{2}-1,c_1=1-\frac{s}{2},b_2=c_2.
\]
If  $b_1=\frac{r}{2},c_1=-\frac{s}{2}$, then
\[\begin{aligned}
0&=[e_2,f_1]\cdot1\\
&=-s(1-r)H_1-r(1-s)H_2+c_2-r-b_2-s\\
&\neq0,
\end{aligned}\]
which is impossible.

If $b_1=\frac{r}{2}-1,c_1=1-\frac{s}{2},b_2=c_2$, then
\[\begin{aligned}
0&=[e_2,f_1]\cdot1\\
&=(2-s)(1-r)H_2+(2-r)(1-s)H_1+b_2(-r-s)\\
&\neq0.
\end{aligned}\]
Hence $s=r=2$, which contradicts our assumption that $sr\ne4$. This completes the proof.
\end{proof}

\begin{example}\label{C_2}
Let $\g$ be the Kac-Moody algebra of type $B_2$ with Cartan matrix $\left(\begin{smallmatrix}
2 & -1 \\
-2 & 2
\end{smallmatrix}\right)$. For any $\a=(a_1,a_2)\in(\C^*)^2,S\subseteq\{1,2\}$, the vector space
$M(\a,S)=\C[H_1,H_2]$ becomes a $\g$-module under the following action for  all $g\in\C[H_1,H_2]$:
\begin{equation}\label{R4.1}\begin{aligned}
&H_i\cdot g=H_ig,i=1,2,\\
&e_1\cdot g=\left\{\begin{array}{ll}
a_1\s_1(g), & 1\in S,\\
-a_1(H_1-\frac{1}{2}H_2-\frac{3}{4})(H_1-\frac{1}{2}H_2-\frac{1}{4})\s_1(g) & 1\not\in S;
\end{array}\right.\\
&f_1\cdot g=\left\{\begin{array}{ll}
-a_1^{-1}(H_1-\frac{1}{2}H_2+\frac{3}{4})(H_1-\frac{1}{2}H_2+\frac{1}{4})\s_1^{-1}(g), & 1\in S,\\
a_1^{-1}\s_1^{-1}(g), & 1\not\in S;
\end{array}\right.\\
&e_2\cdot g=\left\{\begin{array}{ll}
a_2(-2H_1+H_2-\frac{1}{2})\s_2(g), & 1,2\in S,\\
a_2(H_2-\frac{1}{2})(2H_1-H_2+\frac{1}{2})\s_2(g), &1\in S,2\not\in S,\\
a_2(H_2-\frac{1}{2})\s_2(g), & 1,2\not\in S,\\
a_2\s_2(g), & 1\not\in S,2\in S;
\end{array}\right.\end{aligned}\end{equation}
$$\begin{aligned}
&f_2\cdot g=\left\{\begin{array}{ll}
-a_2^{-1}(H_2+\frac{1}{2})\s_2^{-1}(g), & 1,2\in S,\\
a_2^{-1}\s_2^{-1}(g), &1\in S,2\not\in S,\\
a_2^{-1}(2H_1-H_2-\frac{1}{2})\s_2^{-1}(g), & 1,2\not\in S,\\
a_2^{-1}(H_2+\frac{1}{2})(2H_1-H_2-\frac{1}{2})\s_2^{-1}(g), & 1\not\in S,2\in S.
\end{array}\right.
\end{aligned}$$
This can be directly verified by using Serre's relations (Actually some will be verified in the proof of the next lemma). We omit the onerous  computations.
\end{example}

Now, let us prove our main result for Kac-Moody algebras of rank 2.

\begin{theorem}\label{RANK2}
Let $\g$ be the Kac-Moody algebra associated to the Cartan matrix $\left(\begin{smallmatrix}
2 & -r \\
-s & 2
\end{smallmatrix}\right)$. % where $r\leq s$.
Then $\H(\g)\neq\varnothing$ if and only if $rs\leq2$. Moreover,
\begin{enumerate}[(1).]
\item If $r=s=1$, then
\[
\H(\g)=\{M(\a,b,S)|\a\in(\C^*)^2,b\in\C,S\subseteq\{1,2,3\}\},
\]
where $M(\a,b,S)$ are as defined in Example 1.
\item If  $r=1,s=2$, then
\[
\H(\g)=\{M(\a,S)|\a\in(\C^*)^2,S\subseteq\{1,2\}\},
\]
where $M(\a,S)$ are as defined in Example \ref{C_2}.
\end{enumerate}
\end{theorem}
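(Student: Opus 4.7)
The plan is to split into cases on $(r,s)$, using the Dynkin-diagram symmetry $A(r,s)\cong A(s,r)$ to assume WLOG $r\leq s$. When $r=s=1$ the algebra is $\mathfrak{sl}_3$ and the claim reduces to Example 1. The case $rs=2$, i.e.\ $(r,s)=(1,2)$, is type $B_2$; existence of the family $M(\a,S)$ defined in Example~\ref{C_2} will be verified by a direct computation checking Serre's relations $(\ad e_1)^2(e_2)=0$, $(\ad e_2)^3(e_1)=0$, and their $f$-analogues, applied to the cyclic generator $1$. For the classification (``only if'' plus parametrization) I would use Lemma 7 to reduce each $M\in\H(\g)$ to the $4$-tuple $(\S_1,\P_1,\S_2,\P_2)$, and then combine the enumeration displayed after Lemma~\ref{A_1} (three candidate forms each for $(\S_1,\P_1)$ and $(\S_2,\P_2)$) with Lemma 9 and the vanishing of $[e_1,f_2]\cdot 1$ and $[e_2,f_1]\cdot 1$.

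Concretely, for $(r,s)=(1,2)$ part (a) of Lemma 9 rules out the first (bi-linear) form for $(\S_1,\P_1)$, leaving two sub-cases; since $r=1$, part (b) imposes no condition on $(\S_2,\P_2)$, so all three candidate forms survive there, giving six sub-cases in total. In each I would compute
\[ [e_1,f_2]\cdot 1 = \S_1\s_1(\P_2) - \P_2\s_2^{-1}(\S_1), \qquad [e_2,f_1]\cdot 1 = \S_2\s_2(\P_1) - \P_1\s_1^{-1}(\S_2), \]
set both to zero, and compare coefficients in $\C[H_1,H_2]$. This should pin down the free parameters $b\in\C[H_2]$ and $c\in\C[H_1]$ (reducing them to at most linear polynomials and then, after absorbing additive shifts allowed by the $\s_i$, essentially to the constants appearing in Example~\ref{C_2}). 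The four surviving sub-cases should correspond bijectively to the subsets $S\subseteq\{1,2\}$, with $(a_1,a_2)\in(\C^*)^2$ free.

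For the non-existence direction ($rs\geq 3$, $rs\neq 4$) I would split further: (i) both $r,s\geq 2$ with $(r,s)\neq(2,2)$; (ii) $r=1$, $s\geq 3$ (in particular $G_2$ when $s=3$). In case (i), both parts of Lemma 9 apply simultaneously, forcing $\S_i$ or $\P_i$ to have $H_i$-degree $0$ for each $i$; combining this with the four relations $[e_i,f_j]\cdot 1=0$ for $i\neq j$ severely restricts the admissible $4$-tuples, and a contradiction should already fall out of degree-counting in these commutator identities, or at the latest after invoking one Serre relation $(\ad e_i)^{1+a_{ij}}(e_j)\cdot 1=0$. In case (ii), only Lemma 9(a) applies, so the Serre relation $(\ad e_2)^{1+s}(e_1)\cdot 1=0$ must be used directly.

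The main obstacle is expected to be the $G_2$ case $(r,s)=(1,3)$: here only one part of Lemma 9 constrains the data and $c\in\C[H_1]$ retains considerable freedom. I expect that expanding $(\ad e_2)^4(e_1)\cdot 1=0$ via iterated application of the action rules of Lemma 7, and then carefully tracking the leading monomials in $H_1$ and $H_2$ in each surviving sub-case for $(\S_2,\P_2)$, will force $\deg_1 c$ to take a value incompatible with the other constraints already in place. The indefinite sub-cases $r=1$, $s\geq 4$ should then follow the same pattern with more room to spare, since the Serre exponent $1+s$ grows while the possibilities for $(\S_1,\P_1,\S_2,\P_2)$ do not expand correspondingly.
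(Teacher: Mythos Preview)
Your overall strategy is sound and lands in the same place as the paper, but you are missing two simplifications that the paper exploits and that remove precisely the obstacle you flag.

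First, the paper invokes the Chevalley involution ($e_i\leftrightarrow f_i$, $h\mapsto -h$) to assume without loss of generality that $(\S_1,\P_1)$ is in the form $\S_1=a_1$, $\P_1=-a_1^{-1}\big[(H_1-\tfrac{r}{2}H_2)^2+H_1-\tfrac{r}{2}H_2+b\big]$. This cuts your six sub-cases in the $B_2$ analysis to two, and similarly throughout. Then, rather than fixing $(r,s)$ and running separate arguments, the paper works uniformly with $s\ge 2$, uses $[e_1,f_2]\cdot 1=0$ to force $\deg_1\P_2=0$ (so $(\S_2,\P_2)$ is in one of only two forms), and lets the computations themselves pin down $r=1$, $s=2$.

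Second, and more important: you propose to use the Serre relation $(\ad e_2)^{1+s}(e_1)=0$, whose exponent grows with $s$. The paper instead uses the relation on the other side, $(\ad f_1)^{1+r}(f_2)=0$. In the relevant branch one has already established $r=1$ (via Lemma~9(b), since $(\S_2,\P_2)$ bi-linear forces $\deg_2\S_2\,\deg_2\P_2=1$), so this is $(\ad f_1)^{2}(f_2)\cdot 1=0$, a three-term identity that immediately bounds $\deg_2 b$. Your anticipated ``main obstacle'' at $G_2$ disappears: the exponent-$2$ Serre relation is available uniformly for all $s\ge 2$ once $r=1$, and the indefinite cases $s\ge 3$ fall out of the same short computation that handles $s=2$. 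Your instinct that larger $s$ should make things easier is correct, but the mechanism is the \emph{small} exponent $1+r$, not the large one $1+s$.
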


\begin{proof}

From Example 1 and Theorem \ref{AFF}, we may assume that $rs\neq4$ or $1$. Then we may assume that  $s\geq2$. Suppose that $\S_1,\P_1,\S_2,\P_2$ determine a module $M$ in $\H(\g)$. By using Chevalley involution and Lemma \ref{A_1}, we can also assume that
\[
\S_1=a_1,\P_1=-a_1^{-1}[(H_1-\frac{r}{2}H_2)^2+H_1-\frac{r}{2}H_2+b],
\]
with $b=\sum\limits_{j=0}^k\lambda_jH_2^j\in\C[H_2]$ and $a_1\in\C^*$.

Therefore, we have $\deg_2\P_1\neq0$, which implies that $\deg_1\S_2\neq0$. Indeed, if $\deg_1\S_2=0$, then $\S_2\in\C$ and from
\[
0=[e_2,f_1]\cdot1=\S_2(\s_2(\P_1)-\P_1),
\]
we get $\deg_2\P_1=0$, a contradiction. Since
\[
0=[e_1,f_2]\cdot1=a_1(\s_1(\P_2)-\P_2),
\]
we have $\deg_1\P_2=0$. So
\[
(\S_2,\P_2)=(a_2(H_2-sH_1+c_1),a_2^{-1}(-H_2+c_1)),
\]
with $c_1\in\C$, or
\[
(\S_2,\P_2)=(a_2(-(H_2-\frac{s}{2}H_1)^2+H_2-\frac{s}{2}H_1+c),a_2^{-1})
\]
with $c\in\C[H_1]$.

\

{\bf Case i:} $(\S_2,\P_2)=\left(a_2(H_2-sH_1+c_1),a_2^{-1}(-H_2+c_1)\right),c_1\in\C$.

In this case, following from Lemma 9, we have $r=1$. Therefore,
\[\begin{aligned}
0=&(\mathrm{ad}f_1)^2(f_2)\cdot1\\
=&\P_2\big(\P_1\s_1^{-1}(\P_1)-2\P_1\s_1^{-1}\s_2^{-1}(\P_1)+\s_2^{-1}(\P_1)\s_1^{-1}\s_2^{-1}(\P_1)\big)\\
=&a_1^{-2}\P_2\big((b-\s_2^{-1}(b))^2+\frac{1}{2}(b+\s_2^{-1}(b))-\frac{3}{16}\big).
\end{aligned}\]
Hence,
\begin{equation}
(b-\s_2^{-1}(b))^2+\frac{1}{2}(b+\s_2^{-1}(b))-\frac{3}{16}=0.
\end{equation}
If $k>2$, then the leading term of (4.2) is $\lambda_k^2k^2H_2^{2k-2}$, which is not $0$. If $k=1$, the leading term of (4.2) is $\lambda_1H_2$, which is absurd.
So, $k=0$ or $2$.

Suppose $k\leq2$ and $k\neq1$. Then $\P_1=-a_1^{-1}\big((H_1-\frac{1}{2}H_2)^2+H_1-\frac{1}{2}H_2+\lambda_2H_2^2+\lambda_1H_2+\lambda_0\big)$, and therefore
\[\begin{aligned}
0=&[e_2,f_1]\cdot1\\
=&\S_2(\s_2(\P_1)-\P_1)+\P_1(\S_2-\s_1^{-1}(\S_2))\\
=&-a_2a_1^{-1}\big[(H_2-sH_1+c_1)(H_1-(\frac{1}{2}+2\lambda_2)H_2+\frac{3}{4}+\lambda_2-\lambda_1)\\
&+s\big((H_1-\frac{1}{2}H_2)^2+H_1-\frac{1}{2}H_2+\lambda_2H_2^2+\lambda_1H_2+\lambda_0\big)\big]\\
=&-a_2a_1^{-1}(\lambda_2+\frac{1}{4})(s-2)H_2^2-\frac{1}{2}a_2a_1^{-1}(2+4s\lambda_2-s)H_1H_2\\
&+(\lambda_1s+c_1-\lambda_2s+\frac{s}{4})H_1+(\lambda_1s-2\lambda_2c_1+\lambda_2-\frac{c_1}{2}+\frac{3}{4}-\lambda_1-\frac{s}{2})H_2\\
&+c_1(\lambda_2-\lambda_1+\frac{3}{4})+s\lambda_0.
\end{aligned}\]
 So
\[\begin{aligned}
&(\lambda_2+\frac{1}{4})(s-2)=0,\\
&2+4s\lambda_2-s=0,\\
&\lambda_1s+c_1-\lambda_2s+\frac{s}{4}=0,\\
&\lambda_1s-2\lambda_2c_1+\lambda_2-\frac{c_1}{2}+\frac{3}{4}-\lambda_1-\frac{s}{2}=0,\\
&c_1(\lambda_2-\lambda_1+\frac{3}{4})+s\lambda_0=0.
\end{aligned}\]
Hence, from the first two equations and $s\geq2$, we have $s=2$ and $\lambda_2=\frac{1}{4}-\frac{1}{2s}=0$. Therefore, $\lambda_1=0$ and
%\[
%0=[e_2,f_1]\cdot1=-a_2a_1^{-1}(4\lambda_2H_2+c_1-\frac{1}{2}-2\lambda_2+2\lambda_1)H_1+d'_0, d'_0\in\C[H_2],
%\]
%or
%\[
%0=[e_2,f_1]\cdot1=-a_2a_1^{-1}((1-s)H_2-s(\frac{1}{2}-\lambda_1)+1)H_1+d''_0, d''_0\in\C[H_2].
%\]
%Both are impossible.
%So, $k=0$, i.e. $\P_1=-a_1^{-1}[(H_1-\frac{r}{2}H_2)^2+H_1-\frac{r}{2}H_2+b], b\in\C$, and thus
%\[\begin{aligned}
%0=&[e_2,f_1]\cdot1=-a_2a_1^{-1}\big[\frac{1}{4}(s-2)H_2^2+(1-\frac{s}{2})H_1H_2+(\frac{s}{4}+c_1)H_1\\
%&+(\frac{3}{4}-\frac{c_1}{2}-\frac{s}{2})H_2+\frac{3}{4}c_1+bs\big].\end{aligned}\]
%From this, we know that
\[
s=2,c_1=-\frac{1}{2}, b=\frac{3}{16},
\]
i.e.,
\[\begin{aligned}
&\P_1=-a_1^{-1}(H_1-\frac{1}{2}H_2+\frac{3}{4})(H_1-\frac{1}{2}H_2+\frac{1}{4}),\\
&\S_2=a_2(H_2-2H_1-\frac{1}{2}).
\end{aligned}\]
Thus, $r=1,s=2$ and $M\cong M((a_1,a_2),\{1,2\})$.

\

{\bf Case ii:} $(\S_2,\P_2)=(a_2(-(H_2-\frac{s}{2}H_1)^2+H_2-\frac{s}{2}H_1+c),a_2^{-1})$ with $c=\sum\limits_{j=0}^{k'}\mu_jH_1^j\in\C[H_1]$.

\

From $[e_2,f_1]=0$ we see that
\begin{equation}\aligned & \left(r(H_1-\frac{r}{2}H_2)+\frac{r^2}{4}+\frac{r}{2}+\sigma_2b-b\right)\left( -(H_2-\frac{s}{2}H_1)^2+H_2-\frac{s}{2}H_1+c\right)\\
&=\left((H_1-\frac{r}{2}H_2)^2+H_1-\frac{r}{2}H_2+b\right)\left( s(H_2-\frac{s}{2}H_1)-\frac{s^2}{4}-\frac{s}{2}+\sigma_1^{-1}c-c\right).\endaligned
\end{equation}

If $k>2$, from (4.3) we deduce that
$$
(s-k)H_2^{k+1}+\sum\limits_{i=0}^{k}d_iH_2^i=0,
$$
where $d_i\in\C[H_1]$. So, $k=s$. From (4.3) again we see that
\[\aligned&
\big(\lambda_s(-\frac{s^2}{2}H_1-\frac{s^2}{4}+c-\s_1^{-1}(c))
-\lambda_{s-1}-\delta_{s,3}(s-2)\frac{r^2}{4}\big)H_2^{s}\\
&+(\text{lower terms in } H_2)=0.\endaligned
\]
So,
\[
c-\s_1^{-1}(c)=\frac{s^2}{2}H_1+\frac{s^2}{4}+\lambda_s^{-1}(\lambda_{s-1}+\delta_{s,3}(s-2)\frac{r^2}{4}),
\]
which implies that
\[
c=-\frac{s^2}{4}H_1^2-(\lambda_s^{-1}(\lambda_{s-1}+\delta_{s,3}(s-2)\frac{r^2}{4})H_1+\mu_0, \mu_0\in\C.
\]
In this case, expressing (4.3) as a polynomial in $H_1$ we obtain that
\[
s^2(2-r)H_1^3+(\text{lower terms in } H_1)=0.
\]
Therefore, $r=2$. Comparing the coefficients of $H_1^2$ in (4.3) we see that
$$\aligned &\frac{s^2}2(H_2-2+b-\sigma_2(b))+2(sH_2-s/2-\lambda_s^{-1}(\lambda_{s-1}+\delta_{s,3}))\\
&=sH_2- \frac{s^2}2-s/2-\lambda_s^{-1}(\lambda_{s-1}+\delta_{s,3})+(2H_2-1)s^2,\endaligned$$
yielding that $k=s=2$, contradicting the assumption that $rs\ne4$. So we have $k\le2$. (Note that we have not used the assumption $s\ge 2$ in proving $k\le2$).

Using the notation change: $e_1\leftrightarrow f_2, e_2 \leftrightarrow f_1,
h_1\leftrightarrow -h_2, r\leftrightarrow s$ we also have $k'\le2$.

%$$\aligned &(r (H_1-(1/2) r H_2)+(1/4) r^2+(1/2) r+a_2 (H_2-1)^2\\ &+a_1 (H_2-1)-a_2 H_2^2-a_1 H_2)
% (-(H_2-(1/2) s H_1)^2\\ &+H_2-(1/2) s H_1+u_2 H_1^2+u_1 H_1+u_0)\\ &
%-((H_1-(1/2) r H_2)^2+H_1-(1/2) r H_2+a_2 H_2^2+a_1 H_2+a_0)\\ &
% (s (H_2-(1/2) s H_1)-(1/4) s^2-(1/2) s+u_2 (H_1+1)^2\\ &+u_1 (H_1+1)-u_2 H_1^2-u_1 H_1)\endaligned$$

Now   the coefficients of $H_1^3$ and $H_2^3$ in (4.3) are
$$\frac14(4\mu_2-s^2)(r-2), -\frac14(s-2)(4\lambda_2+r^2),$$
which should be $0$. Since both $r$ and $s$ cannot be $2$, we see that $r\ne2$ or $s\ne2$.

If $s\ne2$, we have $\lambda_2=-\frac{r^2}4$. Now the coefficient of $H_2^2H_1$   in (4.3) is $r(s-1)$, yielding that $s=1$, a contradiction.
Hence $r\ne2$, and further  $\mu_2=\frac{s^2}4$.  Now the coefficient of $H_2H_1^2$   in (4.3) is $s(r-1)$, yielding that $r=1$.  If $\lambda_2=-\frac14$, the coefficients of $H_2^2H_1$   in (4.3) is $s-1$, yielding that $s=1$, contradicting to the assumption that $rs\ne1$. Thus $\lambda_2\ne-\frac14$. So $s=2$.
We summarize what I got: $r=1, s=2, \mu_2=1$.

Computing the coefficients of $H_2^2H_1$ and $H_1H_2$  in (4.3) we know that $\lambda_2=0$ and $\mu_1=4\lambda_1$. Now (4.3) becomes
$$\aligned &(1/4-4\lambda_1^2+\mu_0)H_1-2\lambda_1H_2^2+(2\lambda_1-(1/2)\mu_0-2\lambda_0\\
&+1/4-4\lambda_1^2)H_2+\lambda_0+(3/4)\mu_0-\lambda_1\mu_0-4\lambda_0\lambda_1=0.\endaligned$$
yielding $\mu_0=-1/4, \lambda_0=3/16.$  Therefore $$ r=1, s=2, \mu_2=1, \mu_0=-1/4, \lambda_0=3/16, \mu_1=\lambda_1=\lambda_2=0.$$

Moreover   \[\aligned & b=\frac{3}{16}, \,\,\,
c=H_1^2-\frac{1}{4},\\
&\P_1=-a_1^{-1}(H_1-\frac{1}{2}H_2+\frac{3}{4})(H_1-\frac{1}{2}H_2+\frac{1}{4}),\\
&\S_2=a_2(H_2-\frac{1}{2})(2H_1-H_2+\frac{1}{2}).\endaligned
\]
Hence, $M\cong M(\{a_1,a_2\},\{1\})$.
\end{proof}

\section{Modules for Kac-Moody algebras of finite type}

In this section, we will recover Nilsson's results in \cite{N1} with a totally different approach, i.e., determine $\H(\g)$ for finite Kac-Moody algebras $\g$.

Since  $A$ is of finite type , $A$ is invertible. We may take a basis for $\h$ to be
\[
(H_1,\cdots,H_l)^{T}=A^{-1}(\alpha_1^\vee,\cdots,\alpha_l^\vee)^{T}.
\]
Then we have
\[\begin{aligned}
&[H_i,e_j]=\d_{ij}e_j, [H_i,f_j]=-\d_{ij}f_j,\,\,i,j=1,\cdots,l;\\
&([e_1,f_1], [e_2,f_2], \cdots, [e_l,f_l])^{T}=(\alpha_1^\vee,\cdots,\alpha_l^\vee)^{T}=A(H_1,H_2,\cdots,H_l)^{T},\\
&[e_i,f_j]=0, i\neq j,\,\,i,j=1,\cdots,l.
\end{aligned}
\]
It is well-known that generalized Cartan matrices of finite type are given as follows (\cite{Ka}, \cite{H}):
\[
A_l,B_l\,\, (l\geq2),C_l\,\, (l\geq2),D_l\,\, (l\geq4),E_l\,\, (l=6,7,8), F_4,G_2.
\]

The following result is clear.

\begin{lemma}%\label{GEN}
Let $\g$ be a simple Lie algebra of finite type $X_l$ and $M\in\H(\g)$. For any $g\in M=\C[H_1,\cdots,H_l]$ and $ i\in\l$, we have
\[
\begin{aligned}
&H_i\cdot g=H_ig,\\
&e_i\cdot g=\S_i\s_i(g),\\
&f_i\cdot g=\P_i\s_i^{-1}(g),
\end{aligned}
\]
where $\S_i=e_i\cdot 1,\P_i=f_i\cdot 1\in\C[H_1,\cdots,H_l]\setminus\{0\}$.
\end{lemma}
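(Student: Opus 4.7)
The plan is to treat this lemma as a purely formal consequence of two inputs: the freeness of $M$ as a rank-$1$ $U(\h)$-module, and the Cartan--Chevalley commutation relations $[H_j,e_i]=\delta_{ij}e_i$, $[H_j,f_i]=-\delta_{ij}f_i$, which hold after the change of basis $(H_1,\dots,H_l)^T=A^{-1}(\alpha_1^\vee,\dots,\alpha_l^\vee)^T$. The Serre relations play no role, so the argument is uniform in the type $X_l$.

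First I would note that because $\h$ is abelian and $M\cong U(\h)=\C[H_1,\dots,H_l]$ is free of rank $1$, generated by $1$, each $H_i$ acts on $g\in M$ by multiplication by $H_i$, which is the first equation. Next, setting $\S_i=e_i\cdot 1$, I would prove $e_i\cdot g=\S_i\s_i(g)$ by induction on $\deg g$. The base case $g=1$ is immediate. For the inductive step, write $g=H_jg'$ with $\deg g'<\deg g$, and rewrite $[H_j,e_i]=\delta_{ij}e_i$ as $e_iH_j=(H_j-\delta_{ij})e_i$ in $U(\g)$. Then
\[
e_i\cdot(H_jg')\;=\;(H_j-\delta_{ij})(e_i\cdot g')\;=\;(H_j-\delta_{ij})\S_i\s_i(g')\;=\;\S_i\,\s_i(H_jg'),
\]
using the inductive hypothesis and $\s_i(H_j)=H_j-\delta_{ij}$. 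By linearity this extends to all $g$. The formula for $f_i$ is obtained identically from $f_iH_j=(H_j+\delta_{ij})f_i$ together with $\s_i^{-1}(H_j)=H_j+\delta_{ij}$.

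Finally, to see that $\S_i$ and $\P_i$ are nonzero, I would apply the Chevalley relation $[e_i,f_i]=\alpha_i^\vee$ to $1$. On the one hand the result equals $\alpha_i^\vee$, which in the basis $(H_1,\dots,H_l)$ is the $i$-th row $\sum_j a_{ij}H_j$ of $A$, hence a nonzero polynomial since $a_{ii}=2$. On the other hand, using the two previously established formulas,
\[
[e_i,f_i]\cdot 1\;=\;\S_i\,\s_i(\P_i)-\P_i\,\s_i^{-1}(\S_i),
\]
which would vanish identically if either $\S_i=0$ or $\P_i=0$. There is no real obstacle: the lemma merely encodes the eigenvector property of $e_i,f_i$ under $\ad\h$ in the language of the shift automorphisms $\s_i$, and the nondegeneracy of $\S_i,\P_i$ uses only that the diagonal of $A$ is nonzero.
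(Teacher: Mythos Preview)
Your argument is correct and is precisely the routine verification the paper has in mind; the paper itself simply declares the lemma ``clear'' and gives no proof. Your induction on degree via $e_iH_j=(H_j-\delta_{ij})e_i$ is the expected computation, and your use of $[e_i,f_i]\cdot 1=\alpha_i^\vee\neq 0$ to force $\S_i,\P_i\neq 0$ is the natural way to extract that nondegeneracy.
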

Thus, to understand $\H(\g)$, we only need to determine all possible $2l$-tuples $(\S_1,\cdots,\S_l,\P_1,\cdots,\P_l)$ of $\C[H_1,\cdots,H_l]$.

\

%Next, we will establish some useful lemmas for our later discussion.

%Let $\g$ be a Kac-Moody algebra and  $Z$ be an abelian Lie algebra over $\C$. Then $\tilde{\g}=Z\oplus\g$ is a Lie algebra under the following Lie bracket:
%\begin{equation}\label{eq3.1}
%[z,\g]=0, \forall z\in Z.
%\end{equation}
%Let $\tilde{\h}=Z\oplus\h$. Then $\tilde{\h}$ is an abelian Lie algebra and $U(\tilde{\h})
%=\C[\tilde{\h}]\cong\C[Z]\otimes_{\C}\C[\h]$. We can naturally extend the actions of $\sigma_i,i\in \l$ on $U(\h)$ to $U(\tilde{\h})$.

%Assume that there is a $\tilde{\g}$-module structure on $U(\tilde{\h})$ which is a free $U(\tilde{\h})$-module of rank $1$. Since $\g$ is a subalgebra of $\tilde{\g}$, $U(\tilde{\h})$ can be seen as  a $\g$-module which action is defined by
%$$ \chi\cdot \phi g=\phi(\chi\cdot g), \forall \chi\in \mg, \forall \phi\in \C[Z], \forall g\in \C[\h].$$
%Denote $\M(\tilde{\mg})$ to be the full subcategory of $U(\tilde{\mg})$-modules consisting of objects whose restriction to $U(\tilde{\h})$ are free of rank $1$. %For  $\M(\tilde{\g})$ with $\g$ of type $A_l$ we have the following

Similar as Lemma \ref{A_1}, we can prove

\begin{lemma}\label{AXLEM}
Let $\g$ be a Kac-Moody algebra and  $Z$ be an abelian Lie algebra over $\C$.
\begin{enumerate}[(1).]
\item Let $\g$ be of type $A_l$ with $l\geq2$. For any $\a=(a_1,\cdots, a_l)\in(\C^*)^l, b\in U(Z)$ and $S\subseteq\l\cup\{l+1\}$, the space
 $M(\a,b,S)=U(\tilde{\h})$ becomes a   $\tilde{\g}$-module with the following action for all $\tilde{g}\in U(\tilde{\h})$:
\[\begin{aligned}
&H_i\cdot \tilde{g}=H_i\tilde{g}, z\cdot \tilde{g}=z\tilde{g}, i\in\l, \forall z\in Z;\\
&e_i\cdot \tilde{g}=\left\{\begin{array}{ll}
a_i(H_{i+1}-H_i-b)\s_i(\tilde{g}), &i,i+1\in S,\\
a_i\s_i(\tilde{g}), &i\in S,i+1\not\in S,\\
a_i(H_i-H_{i-1}-b-1)(H_{i+1}-H_i-b)\s_i(\tilde{g}), &i\not\in S, i+1\in S,\\
a_i(H_i-H_{i-1}-b-1)\s_i(\tilde{g}), &i,i+1\not\in S;
\end{array}\right.\\
&f_i\cdot \tilde{g}=\left\{\begin{array}{ll}
a_i^{-1}(H_i-H_{i-1}-b)\s_i^{-1}(\tilde{g}), &i,i+1\in S,\\
a_i^{-1}(H_i-H_{i-1}-b)(H_{i+1}-H_i-b-1)\s_i^{-1}(\tilde{g}), &i\in S,i+1\not\in S,\\
a_i^{-1}\s_i^{-1}(\tilde{g}), &i\not\in S, i+1\in S,\\
a_i^{-1}(H_{i+1}-H_i-b-1)\s_i^{-1}(\tilde{g}), &i,i+1\not\in S;
\end{array}\right.
\end{aligned}\]
where $H_{l+1}=H_0:=0$. And we have
\[
\H(\tilde{\g})=\{M(\a,b,S)|\a\in(\C^*)^l,b\in U(Z),S\subseteq\l\cup\{l+1\}\}.
\]
\item Let $\g$ be of type $B_2$. Let $\a=(a_1,a_2)\in(\C^*)^2$ and let $\S_{i,S}=e_i\cdot1,\P_{i,S}=f_i\cdot1\,\,(i=1,2,S\subseteq\{1,2\})$ be as in equation (4.1).   The space  $M(\a,S)=U(\tilde{\h})$ becomes a   $\tilde{\g}$-module with the following action for all $\g\in U(\tilde{\h}), z\in Z$ and $i=1,2$:\begin{equation}\label{R5.1}\begin{aligned}
&H_i\cdot g=H_ig, \\ &z\cdot g=zg,\\
&e_i\cdot g=\S_{i,S}\s_i(g),\\
&f_i\cdot g=\P_{i,S}\s_i^{-1}(g).
\end{aligned}\end{equation}
And we have $\H(\tilde{\g})=\{M(\a,S)|\a\in(\C^*)^2,S\subseteq\{1,2\}\}$.
\item $\H(\g)=\varnothing$ if and only if  $\H(\tilde{\g})=\varnothing$.
\end{enumerate}
\end{lemma}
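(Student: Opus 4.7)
The plan is to treat the three parts in parallel, following the blueprint of Lemma~\ref{A_1}. Parts~(1) and (2) share the same architecture, with Example~1 (Nilsson) and Theorem~\ref{RANK2} playing the role of the base classification, while (3) is a short corollary of the setup.

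For the existence direction in (1) and (2), I would verify directly that the displayed actions satisfy the Kac-Moody relations \eqref{R2.1} and the Serre relations \eqref{R2.2}. These are polynomial identities in the module action, and the verifications in Example~1 and Example~\ref{C_2} (which already appeared in the $Z=0$ case) continue to hold verbatim when the scalar $b$ is promoted from $\C$ to $U(Z)$, since $Z$ is required to act centrally and its elements commute past all coefficients in the formulas.

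For the uniqueness direction, let $M\in\H(\tilde{\g})$. Mimicking Lemma~\ref{A_1}, I form the Lie algebra $\hat{\g}:=U(Z)\otimes_{\C}\g$ over the PID $U(Z)$ inside $U(\tilde{\g})$, so that $M\cong U(Z)\otimes_{\C}U(\h)$ carries a $\hat{\g}$-action that is free of rank~$1$ over the coefficient ring $U(Z)\otimes U(\h)$. Extending scalars from $U(Z)$ to $\overline{\C(Z)}$, the algebraic closure of the fraction field, produces a $G$-module in $\H(G)$ for $G:=\overline{\C(Z)}\otimes_{\C}\g$, a Kac-Moody algebra of the same type $A_l$ (resp.\ $B_2$) over the algebraically closed field $\overline{\C(Z)}$. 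Since the proofs underlying Example~1 (Theorems~11 and 30 of \cite{N}) and Theorem~\ref{RANK2} use only that the base field is algebraically closed of characteristic zero, they apply verbatim to $G$, producing parameters $a_i\in\overline{\C(Z)}^{*}$, $b\in\overline{\C(Z)}$ (when present) and a subset $S$ that describe the action in one of the listed forms. The requirement $e_i\cdot 1,\,f_i\cdot 1\in U(\tilde{\h})$ then forces $a_i,b\in U(Z)$, and inspecting an expression in which $a_i^{-1}$ appears as a coefficient forces $a_i\in U(Z)^{\times}=\C^*$, exactly as at the end of the proof of Lemma~\ref{A_1}.

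For part (3), in one direction I extend any $M\in\H(\g)$ to a $\tilde{\g}$-module on $U(Z)\otimes_{\C}M\cong U(\tilde{\h})$ by letting $Z$ act by multiplication on the first tensor factor; this produces an object of $\H(\tilde{\g})$. Conversely, given $N\in\H(\tilde{\g})$, the centrality of $Z$ in $\tilde{\g}$ makes $N/ZN$ a well-defined $\g$-module whose restriction to $U(\h)$ is $(U(Z)/ZU(Z))\otimes_{\C}U(\h)\cong U(\h)$, hence an object of $\H(\g)$. The main technical point I expect to be delicate is the descent step in (1) and (2): after the classification over $\overline{\C(Z)}$ is in hand, one must check in each of the four cases (indexed by the behavior of $i$ and $i+1$ with respect to $S$ for $A_l$, or by $S\subseteq\{1,2\}$ for $B_2$) that the parameters really lie in $U(Z)$ and that $a_i$ is invertible there, not merely in the algebraic closure. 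This is routine but case-by-case, following the template set at the end of the proof of Lemma~\ref{A_1}.
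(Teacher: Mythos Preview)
Your proposal is correct and matches the paper's approach: the paper gives no detailed proof of this lemma, stating only that it can be proved ``Similar as Lemma~\ref{A_1}'', and your plan for parts (1) and (2) is precisely that scalar-extension argument (form $\hat{\g}=U(Z)\otimes_{\C}\g$, pass to the algebraic closure $\overline{\C(Z)}$, invoke the base-field classification from \cite{N} or Theorem~\ref{RANK2}, then descend). Your direct tensor/quotient argument for part (3) is a clean way to handle that statement, which the paper does not spell out separately.
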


Using Lemma \ref{AXLEM}, we can obtain the following

\begin{proposition}%\label{D_4}
If $\g$ is of type $D_4$, then $\H(\g)=\varnothing$.
\end{proposition}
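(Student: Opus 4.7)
The plan is to assume that some $M \in \H(\g)$ exists and reach a contradiction by simultaneously restricting $M$ to the three $A_2$-subsystems attached to the central node of the $D_4$ Dynkin diagram. Label the nodes so that node $2$ is trivalent, that is, $a_{12}=a_{23}=a_{24}=-1$ and $a_{13}=a_{14}=a_{34}=0$.

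For each adjacent pair $(i,j)\in\{(1,2),(2,3),(2,4)\}$, the subalgebra $\g_{ij}$ generated by $e_i,f_i,e_j,f_j$ is of type $A_2$, and the subspace $Z_{ij}=\{z\in\h:\alpha_i(z)=\alpha_j(z)=0\}$ is $2$-dimensional and complementary to $\C\alpha_i^\vee\oplus\C\alpha_j^\vee$ in $\h$. Hence $\tilde\g_{ij}:=Z_{ij}\oplus\g_{ij}$ is the auxiliary Lie algebra of Section~2, and the restriction $M|_{\tilde\g_{ij}}$ lies in $\H(\tilde\g_{ij})$. Applying Lemma~\ref{AXLEM}(1) with $l=2$ identifies this restriction with $M(\a^{(ij)},b^{(ij)},S^{(ij)})$ for some $\a^{(ij)}\in(\C^*)^2$, $b^{(ij)}\in U(Z_{ij})$ and $S^{(ij)}\subseteq\{1,2,3\}$; concretely $b^{(12)}\in\C[H_3,H_4]$, $b^{(23)}\in\C[H_1,H_4]$, and $b^{(24)}\in\C[H_1,H_3]$.

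From these restrictions one extracts three explicit polynomial expressions for $\S_2=e_2\cdot 1$ (and, in parallel, for $\P_2=f_2\cdot 1$); each is one of the four shapes listed in Lemma~\ref{AXLEM}(1), depending on whether the central-node index and the auxiliary index $3$ lie in $S^{(ij)}$. Rewriting the $A_2$-local Cartan variables $\tilde H_k^{(ij)}$ in terms of the global basis (for example $\tilde H_2^{(12)} = H_2-\tfrac{2}{3}(H_3+H_4)$), the three expressions for $\S_2$ must coincide as polynomials in $\C[H_1,H_2,H_3,H_4]$. Matching degree profiles in each $H_k$ eliminates most of the $4^3$ combinations at once and pins the parameters $b^{(ij)}$ down to a short list of normalised forms. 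The final contradiction should then be delivered by the non-adjacent Serre relations: since $1,3,4$ are pairwise disconnected in the Dynkin diagram, the equations $[e_i,e_j]\cdot 1=[f_i,f_j]\cdot 1=0$ for $(i,j)\in\{(1,3),(1,4),(3,4)\}$ translate into
\[
\S_i\sigma_i(\S_j)=\S_j\sigma_j(\S_i),\qquad \P_i\sigma_i^{-1}(\P_j)=\P_j\sigma_j^{-1}(\P_i),
\]
and substituting the surviving forms should in every case produce a non-zero polynomial on one side.

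The main obstacle is the combinatorial bookkeeping. After exploiting the obvious symmetric-group action permuting nodes $\{1,3,4\}$ together with the Chevalley involution $e_i\leftrightarrow f_i$ to collapse cases, one must still verify in each residual configuration that the Serre identity fails, rather than being rescued by a hidden cancellation among the three $b^{(ij)}$. This resembles in spirit the degree-tracking argument of Lemma~\ref{A_1} and Theorem~\ref{RANK2} used for rank-$2$ Kac--Moody algebras.
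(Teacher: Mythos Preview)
Your plan is viable in principle but takes a noticeably harder route than the paper, and the proposal as written stops short of a proof: you outline the bookkeeping but never carry it out, so there is no verification that the surviving configurations actually fail the Serre constraints.

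The paper's argument is both shorter and structurally different. Rather than restricting to the three $A_2$-subsystems on edges, it removes one leaf at a time and uses the resulting $A_3\cong\sl_4$ subsystems $\g_k$ (for $k=1,3,4$). The payoff is this: in the $A_l$-classification of Lemma~\ref{AXLEM}(1), the action of $e_i$ involves only the local variables $\bar H_{i-1},\bar H_i,\bar H_{i+1}$, so for the path $1\text{--}2\text{--}3$ inside $\g_4$ one reads off immediately that $\deg_3 \S_1=\deg_3 \P_1=0$ and $\deg_1 \S_3=\deg_1 \P_3=0$. Applying the same observation to $\g_1$ (path $4\text{--}2\text{--}3$) forces $\deg_4 \S_3=\deg_4 \P_3=0$, which pins down the parameter $b\in\C[H_4]$ occurring in the $\g_4$-formulas; but that value of $b$ then makes $\S_1$ or $\P_1$ genuinely depend on $H_4$, contradicting the analogous constraint coming from $\g_3$. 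No Serre relations beyond those already encoded in Lemma~\ref{AXLEM} are needed.

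Your $A_2$-restrictions cannot produce this kind of ``degree separation'' because in a rank-$2$ piece the two nodes are adjacent and both local $\tilde H$'s appear in every formula. Consequently, matching the three expressions for $\S_2$ only relates the three parameters $b^{(12)},b^{(23)},b^{(24)}$ without eliminating them, and you are forced into the case analysis you describe. That case analysis would presumably terminate, but you have not shown it does; in particular, the sentence ``substituting the surviving forms should in every case produce a non-zero polynomial'' is exactly the step that needs to be checked. If you want to salvage your approach with minimal extra work, the cleanest fix is to upgrade at least one of your restrictions to an $A_3$-piece: once you know that $(\S_1,\P_1)$ is independent of $H_3$ and $(\S_3,\P_3)$ is independent of $H_1$, the remaining matching becomes a two-line degree count rather than a $4^3$-case enumeration.
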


\begin{proof}
Suppose the Dynkin diagram of $\g$ is as follow
\[
\begin{tikzpicture}[scale=0.7]
    \draw (2,0) -- (4,0);
    \draw (3,0) -- (3,1);
    %\filldraw (1.9,0) node[left]{$\cdots$};
    \filldraw[color=black] (2,0) circle (5pt) node[below=2pt]{\small 1};
    \filldraw[color=white] (2,0) circle (4pt);
    \filldraw[color=black] (3,0) circle (5pt) node[below=2pt]{\small 2};
    \filldraw[color=white] (3,0) circle (4pt);
    \filldraw[color=black] (4,0) circle (5pt) node[below=2pt]{\small 3};
    \filldraw[color=white] (4,0) circle (4pt);
   % \filldraw (4.1,0) node[right]{$\cdots$};
    \filldraw[color=black] (3,1) circle (5pt) node[right=2pt]{\small 4};
    \filldraw[color=white] (3,1) circle (4pt);
    %\filldraw (3,1.1) node[above]{$\vdots$};
    %\filldraw (3,-0.1) node[below=5pt]{$\vdots$};
\end{tikzpicture}
\]
To the contrary, assume that $\H(\g)\neq\varnothing$ and take $M\in\H(\g)$. For $k=1,3,4$, let $\g_k$ be the subalgebra of $\g$ generated by $\{e_i,f_i,\h|i\in\{1,2,3,4\},i\neq k\}$. Then
\[
\g_k\cong\mathfrak{sl}_4(\C)\oplus\C H_k,
\]
where $[\sl_4(\C),H_k]=0$. Let $M_k=\mathrm{Res}_{\g_k}^{\g}M$ for $k=1,3,4$. Take $\h_k$ to be the subalgebra of $\h$ generated by $\{\alpha_i^\vee|i\in\{1,2,3,4\}\setminus\{k\}\}$.

Let $\bar{H}_i\in\h_4$ with the property
\[
[\bar{H}_i,e_j]=\delta_{ij}e_j,[\bar{H}_i,f_j]=-\delta_{ij}f_j,i,j=1,2,3.
\]
From $$\aligned
2\bar H_1-\bar H_2&=\alpha_1^\vee=2 H_1- H_2,\\
-\bar H_1+2\bar H_2-\bar H_3&=\alpha_2^\vee=-H_1+2H_2-H_3-H_4,\\
-\bar H_2+2\bar H_3&=\alpha_3^\vee=-H_2+2H_3,
\endaligned$$ we see that
\[
\bar{H}_3=H_3-\frac{1}{2}H_4, \bar{H}_i=H_i-\frac{i}{2}H_4,i=1,2.
\]
Considering $M_4$, by Lemma 17, we have $(\S_1,\P_1)$ is one of the following
\[\left\{\begin{array}{l}
(a_1(H_2-H_1-\frac{1}{2}H_4-b),a_1^{-1}(H_1-\frac{1}{2}H_4-b)),\\
(a_1,a_1^{-1}(H_1-\frac{1}{2}H_4-b)(H_2-H_1-\frac{1}{2}H_4-b-1)),\\
(a_1(H_1-\frac{1}{2}H_4-b-1)(H_2-H_1-\frac{1}{2}H_4-b),a_1^{-1}),\\
(a_1(H_1-\frac{1}{2}H_4-b-1),a_1^{-1}(H_2-H_1-\frac{1}{2}H_4-b-1));
\end{array}\right.\]
and $(\S_3,\P_3)$ is one of the following respectively
\[\left\{\begin{array}{l}
(-a_3(H_3-\frac{1}{2}H_4+b),a_3^{-1}(H_3-H_2+\frac{1}{2}H_4-b)),\\
(-a_3(H_3-H_2+\frac{1}{2}H_4-b-1)(H_3-\frac{1}{2}H_4+b),a_3^{-1}),\\
(a_3,-a_3^{-1}(H_3-H_2+\frac{1}{2}H_4-b)(H_3-\frac{1}{2}H_4+b+1)),\\
(a_3(H_3-H_2+\frac{1}{2}H_4-b-1),-a_3^{-1}(H_3-\frac{1}{2}H_4+b+1));
\end{array}\right.\]
where $a_1,a_3\in\C^*$ and $b\in\C[H_4]$. Therefore, we have
\[
\deg_3\S_1=\deg_3\P_1=\deg_1\S_3=\deg_1\P_3=0.
\]
Similarly, considering $M_1$, we have
\[
\deg_4\S_3=\deg_4\P_3=0.
\]
This is impossible. Therefore, $M$ does not exist, namely, $\H(\g)=\varnothing$.
\end{proof}

If $\g$ is a Kac-Moody algebra whose Dynkin diagram $\Gamma$ contains a subdiagram of type $D_4$, that is $\Gamma$ is of the form:
\[
\begin{tikzpicture}[scale=0.7]
    \draw (2,0) -- (4,0);
    \draw (3,0) -- (3,1);
    \filldraw (1.9,0) node[left]{$\cdots$};
    \filldraw[color=black] (2,0) circle (5pt) node[below=2pt]{\small 1};
    \filldraw[color=white] (2,0) circle (4pt);
    \filldraw[color=black] (3,0) circle (5pt) node[below=2pt]{\small 2};
    \filldraw[color=white] (3,0) circle (4pt);
    \filldraw[color=black] (4,0) circle (5pt) node[below=2pt]{\small 3};
    \filldraw[color=white] (4,0) circle (4pt);
    \filldraw (4.1,0) node[right]{$\cdots$};
    \filldraw[color=black] (3,1) circle (5pt) node[right=2pt]{\small 4};
    \filldraw[color=white] (3,1) circle (4pt);
    \filldraw (3,1.1) node[above]{$\vdots$};
    \filldraw (3,-0.1) node[below=5pt]{$\vdots$};
\end{tikzpicture}
\]
By a subdiagram of $\Gamma$ we mean a subset of vertices of $\Gamma$ and all edges between them.
Then $\{e_i,f_i,\h|i=1,2,3,4\}$ generates a subalgebra $\g_1$ of $\g$ that is isomorphic to $\mathfrak{o}_8(\C)\oplus Z$ for some abelian Lie algebra $Z$. Therefore, for any $M\in\H(\g)$,
\[\mathrm{Res}_{\g_1}^{\g}M\in\H(\g_1)=\varnothing.\]
Thus, we have

\begin{lemma}\label{D_4}
Let $\g$ be a Kac-Moody algebra with Dynkin diagram $\Gamma$. If $\Gamma$ contains a subdiagram of type $D_4$, then $\H(\g)=\varnothing$.
\end{lemma}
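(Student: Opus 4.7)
My approach is to carve out the $D_4$-subalgebra alongside a commuting Cartan complement and then quote Proposition~18 together with Lemma~\ref{AXLEM}(3) to contradict the existence of any $M\in\H(\g)$. This is the same outline the paragraph preceding the lemma sketches, but the only nontrivial bookkeeping is the decomposition $\g_1\cong\mathfrak{o}_8(\C)\oplus Z$, which I would justify carefully using invertibility of the $D_4$ Cartan matrix.

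In more detail: index the $D_4$-subdiagram by $I=\{1,2,3,4\}\subseteq\l$ and let $\g_{D_4}$ denote the subalgebra of $\g$ generated by $\{e_i,f_i,\alpha_i^\vee : i\in I\}$. Since the simple coroots are linearly independent in $\h$ and the $D_4$ Cartan matrix is invertible (so the submatrix $(a_{ij})_{i,j\in I}$ is symmetrizable and finite type), the relations (\ref{R2.1})--(\ref{R2.2}) restricted to $I$ coincide with the Gabber--Kac presentation of $\mathfrak{o}_8(\C)$, whence $\g_{D_4}\cong\mathfrak{o}_8(\C)$. Next set $\h_{D_4}:=\sum_{i\in I}\C\alpha_i^\vee$ and $Z:=\{h\in\h : \alpha_i(h)=0\text{ for all }i\in I\}$. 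The matrix $(\alpha_i(\alpha_j^\vee))_{i,j\in I}$ is exactly the invertible $D_4$ Cartan matrix, so the restrictions $\alpha_i|_{\h_{D_4}}$ are linearly independent, forcing $Z\cap\h_{D_4}=0$; a dimension count then yields $\h=\h_{D_4}\oplus Z$. By construction $Z$ annihilates every $e_i,f_i$ $(i\in I)$ under the adjoint action, so $Z$ is central in the subalgebra $\g_1\subseteq\g$ generated by $\h\cup\{e_i,f_i:i\in I\}$. Consequently $\g_1=\g_{D_4}\oplus Z$ as an internal Lie-algebra direct sum, which is precisely the central extension $\tilde{\g}_{D_4}$ of Section~2.

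Now assume for contradiction that some $M\in\H(\g)$ exists. Restriction along $\g_1\hookrightarrow\g$ preserves freeness of rank $1$ over $U(\h)=U(\h_{D_4})\otimes U(Z)=U(\tilde{\h}_{D_4})$, so $\mathrm{Res}_{\g_1}^{\g}M\in\H(\tilde{\g}_{D_4})$. Proposition~18 supplies $\H(\g_{D_4})=\varnothing$, and Lemma~\ref{AXLEM}(3) upgrades this to $\H(\tilde{\g}_{D_4})=\varnothing$, the required contradiction.

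The only genuinely delicate step is the Lie-algebra splitting $\g_1=\g_{D_4}\oplus Z$: \emph{a priori} the ``extra'' Cartan directions of $\h$ lying outside $\h_{D_4}$ could act nontrivially on the $e_i,f_i$, and then $\g_1$ would not be a direct sum at all but merely an extension. Invertibility of the $D_4$ Cartan matrix is precisely what lets us pick a complementary subspace $Z$ on which all $\alpha_i$ $(i\in I)$ vanish simultaneously, so that $Z$ commutes with $\g_{D_4}$ outright and the reduction to the framework of Section~2 goes through. All other ingredients of the proof---restriction preserving the $\H$-property and the conclusion via Proposition~18 and Lemma~\ref{AXLEM}(3)---are immediate.
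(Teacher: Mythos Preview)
Your argument is correct and follows the same route as the paper: the paragraph preceding the lemma already asserts $\g_1\cong\mathfrak{o}_8(\C)\oplus Z$ and invokes $\H(\g_1)=\varnothing$, and you have simply supplied the linear-algebra justification (via invertibility of the $D_4$ Cartan matrix and the splitting $\h=\h_{D_4}\oplus Z$) that the paper leaves implicit. One correction: the result you need is Proposition~14 (that $\H(\g)=\varnothing$ for $\g$ of type $D_4$), not Proposition~18, which concerns type $C_l$.
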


Following from this lemma, we can easily get

\begin{corollary}\label{DE}
Let $\g$ be a finite Kac-Moody algebra of type $D_l (l\geq4)$ or $E_l (l=6,7,8)$. Then $\H(\g)=\varnothing$.
\end{corollary}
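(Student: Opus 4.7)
The plan is to observe that this corollary is an essentially immediate consequence of Lemma \ref{D_4}, so the entire task reduces to verifying that each Dynkin diagram in question contains a $D_4$ subdiagram (in the sense defined just before Lemma \ref{D_4}).

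First I would handle type $D_l$. When $l=4$ the diagram itself is $D_4$, so there is nothing to show. For $l>4$, the Dynkin diagram of $D_l$ is a chain with a fork at one end, and the four vertices consisting of the unique trivalent vertex together with its three neighbors form a $D_4$ subdiagram (the short branch has length one, and the two ``long'' arms are each truncated to length one). Next, for each of $E_6, E_7, E_8$, I would point out that the diagram has a unique trivalent vertex; taking that vertex together with its three adjacent vertices again yields a $D_4$ subdiagram.

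Having identified a $D_4$ subdiagram in every case, I would invoke Lemma \ref{D_4} to conclude that $\H(\g)=\varnothing$ for each such $\g$.

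There is no real obstacle here: the substantive work has already been done in establishing Proposition 17 (the $D_4$ case) and in bootstrapping it to Lemma \ref{D_4} via the subalgebra generated by $\{e_i,f_i,\h \mid i=1,2,3,4\}$ relative to a chosen $D_4$ subdiagram. The present corollary is purely a combinatorial check on the Dynkin diagrams, so the proof will be a single short paragraph stating that each listed diagram contains a $D_4$ subdiagram and therefore falls under Lemma \ref{D_4}.
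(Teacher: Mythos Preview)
Your proposal is correct and matches the paper's approach exactly: the paper simply states ``Following from this lemma, we can easily get'' Corollary \ref{DE}, relying on the observation that the Dynkin diagrams of $D_l$ ($l\geq 4$) and $E_l$ ($l=6,7,8$) all contain a $D_4$ subdiagram, so Lemma \ref{D_4} applies directly. Your only slip is the numbering---the $D_4$ result is Proposition 14, not 17---but the logic is identical.
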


Next, we consider Kac-Moody algebras $\g$ whose Dynkin diagram  contains a subdiagram of type $B_3$.

\begin{lemma}\label{B,F}
If $\g$ is a Kac-Moody algebra of type $B_l (l\geq3)$ or $F_4$, then $\H(\g)=\varnothing$.
\end{lemma}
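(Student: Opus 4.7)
The plan is to reduce the lemma to the single claim $\H(B_3)=\varnothing$, and then to prove that claim by restricting a putative $M\in\H(B_3)$ to the two codimension-one semisimple subalgebras obtained by deleting an endpoint of the $B_3$ Dynkin diagram, and comparing the resulting descriptions of $\S_2=e_2\cdot 1$ and $\P_2=f_2\cdot 1$. The reduction is immediate: the Dynkin diagram of $B_l$ with $l\geq 3$ contains the subdiagram on $\{l-2,l-1,l\}$ of type $B_3$, and that of $F_4$ contains the subdiagram on $\{1,2,3\}$, also of type $B_3$. Exactly as in the proof of Lemma \ref{D_4}, and using Lemma \ref{AXLEM}(3) to absorb the complement of the $B_3$-Cartan in $\h$ into an abelian $Z$, this reduces everything to showing $\H(B_3)=\varnothing$.

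So let $M\in\H(B_3)$ and write $H_1,H_2,H_3$ for the fundamental coweights, so that $\S_i,\P_i\in\C[H_1,H_2,H_3]$. Restricting $M$ to the subalgebra generated by $\{e_1,f_1,e_2,f_2\}\cup\h$, which is isomorphic to $A_2\oplus\C H_3$ (since $H_3$ centralizes $e_1,f_1,e_2,f_2$), and invoking Lemma \ref{AXLEM}(1) with $Z=\C H_3$, one obtains parameters $a_1,a_2\in\C^*$, $b\in\C[H_3]$ and $S\subseteq\{1,2,3\}$ such that $\S_2,\P_2$ are given by the explicit formulas of that lemma in the $A_2$-coweights $\bar H_1=H_1-\tfrac{1}{3}H_3$ and $\bar H_2=H_2-\tfrac{2}{3}H_3$. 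Because $\deg_1\bar H_1=1$, $\deg_1\bar H_2=0$ and $\deg_1 b=0$, running through the four cases $S\cap\{2,3\}\in\{\varnothing,\{2\},\{3\},\{2,3\}\}$ yields in every case
\[
\deg_1\S_2+\deg_1\P_2=1,
\]
because exactly one of $\S_2,\P_2$ carries an essential factor involving $\bar H_2-\bar H_1$.

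Symmetrically, restricting $M$ to the subalgebra generated by $\{e_2,f_2,e_3,f_3\}\cup\h$, which is isomorphic to $B_2\oplus\C H_1$, and applying Lemma \ref{AXLEM}(2) with $Z=\C H_1$, expresses $\S_2,\P_2$ via the formulas of Example \ref{C_2} in the $B_2$-coweights $\tilde H_2=-H_1+H_2$ and $\tilde H_3=-H_1+H_3$, parametrized by $a_2',a_3'\in\C^*$ and $T\subseteq\{1,2\}$. In both cases $1\in T$ and $1\notin T$, a quadratic factor of the form $(\tilde H_2-\tfrac{1}{2}\tilde H_3+c)(\tilde H_2-\tfrac{1}{2}\tilde H_3+c')$ appears exactly once among $\{\S_2,\P_2\}$, and since $\tilde H_2-\tfrac{1}{2}\tilde H_3=-\tfrac{1}{2}H_1+H_2-\tfrac{1}{2}H_3$ has $\deg_1=1$, this forces
\[
\deg_1\S_2+\deg_1\P_2=2.
\]
These two computations describe the same polynomials $\S_2,\P_2$ determined by the $B_3$-module $M$ and so must agree; but $1\neq 2$, contradiction. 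The main obstacle in implementing this plan is not conceptual but computational: one must work out the change of basis between $(\bar H_1,\bar H_2,H_3)$, $(H_1,\tilde H_2,\tilde H_3)$ and $(H_1,H_2,H_3)$ via the $B_3$ Cartan inverse and the relevant sub-inverses, and then patiently verify the $4+2=6$ case checks. Once these coordinate translations are in place, both degree identities are immediate from the explicit action formulas in Lemma \ref{AXLEM} and Example \ref{C_2}.
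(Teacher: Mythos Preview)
Your proof is correct and follows essentially the same route as the paper's: reduce to $B_3$ via the subdiagram argument (exactly as in Lemma~\ref{D_4}), then restrict a putative $M\in\H(B_3)$ to the $A_2\oplus\C H_3$ and $B_2\oplus\C H_1$ subalgebras, apply Lemma~\ref{AXLEM}(1) and (2), and compare $(\deg_1\S_2,\deg_1\P_2)$. The paper records the four $A_2$-cases and two $B_2$-cases explicitly (equations~(\ref{R5.2}) and~(\ref{R5.3})) and obtains $(0,1)/(1,0)$ versus $(0,2)/(2,0)$; your formulation $\deg_1\S_2+\deg_1\P_2=1$ versus $2$ is the same contradiction, just packaged more concisely. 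Your coweight formulas $\bar H_i=H_i-\tfrac{i}{3}H_3$ and $\tilde H_2=-H_1+H_2$, $\tilde H_3=-H_1+H_3$ match the paper's computations.
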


\begin{proof} Since the Dynkin diagram  of $\g$ contains a subdiagram of type $B_3$,
using the same argument as Lemma \ref{D_4}, we only need to prove the statement for $\g$ of type $B_3$.

Let the Dynkin diagram of $\g$ be as follows.
\[
\begin{tikzpicture}[scale=0.7]
    \draw (1,0) -- (2,0);
    \draw (2,0.1) -- (3,0.1);
    \draw (2,-0.1) -- (3,-0.1);
    \draw (2.4,0.2) -- (2.6,0) -- (2.4,-0.2);
    \filldraw[color=black] (1,0) circle (5pt) node[below=2pt]{\small 1};
    \filldraw[color=white] (1,0) circle (4pt);
    \filldraw[color=black] (2,0) circle (5pt) node[below=2pt]{\small 2};
    \filldraw[color=white] (2,0) circle (4pt);
    \filldraw[color=black] (3,0) circle (5pt) node[below=2pt]{\small 3};
    \filldraw[color=white] (3,0) circle (4pt);
\end{tikzpicture}
\]
Suppose $\H(\g)\neq\varnothing$ and take $M\in\H(\g)$. For $k=1,3$ let $\g_k$ be the subalgebra of $\g$ generated by $\{e_i,f_i,\h|i\in\{1,2,3\}\setminus\{k\}\}$. Then
\[
\g_1\cong\mathfrak{o}_5(\C)\oplus\C H_1, [\mathfrak{o}_5(\C),H_1]=0;
\]
and
\[
\g_3\cong\mathfrak{sl}_3(\C)\oplus\C H_3, [\mathfrak{sl}_3(\C),H_3]=0.
\]
Let $M_k=\mathrm{Res}_{\g_k}^{\g}M$ and $\h_k$ be the subalgebra of $\h$ generated by $\{\alpha_i^\vee|i\neq k\}$. For $i=1,2$ take $\bar{H}_i\in\h_3$ such that
\[
[\bar{H}_i,e_j]=\delta_{ij}e_j, [\bar{H}_i,f_j]=-\delta_{ij}f_j, \,\,\, i,j=1,2.
\]
From $$\aligned
2\bar H_1-\bar H_2&=\alpha_1^\vee=2 H_1- H_2,\\
-\bar H_1+2\bar H_2&=\alpha_2^\vee=-H_1+2H_2-H_3, \endaligned$$ we see that
\[
\bar{H}_i=H_i-\frac{i}{3}H_3,i=1,2.
\]
Considering $M_3$,  from Lemma \ref{AXLEM}, we see that $(\S_2,\P_2)$ is one of the following:
\begin{equation}\label{R5.2}
\left\{\begin{array}{l}
(a(H_2-\frac{2}{3}H_3+b),\,\,\,\,-a^{-1}(H_2-H_1-\frac{1}{3}H_3-b)),\\
(a,\,\,\,\,-a^{-1}(H_2-H_1-\frac{1}{3}H_3-b)(H_2-\frac{2}{3}H_3+b+1)),\\
(-a(H_2-H_1-\frac{1}{3}H_3-b-1)(H_2-\frac{2}{3}H_3+b),\,\,\,\,a^{-1}),\\
(a(H_2-H_1-\frac{1}{3}H_3-b-1),\,\,\,\,-a^{-1}(H_2-\frac{2}{3}H_3+b+1)),
\end{array}\right.
\end{equation}
where $a\in\C^*,b\in\C[H_3]$.

Similarly, considering $M_1$, we know that $(\S_2,\P_2)$ is one of the following
\begin{equation}\label{R5.3}
\left\{\begin{array}{l}
(a,\,\,-a^{-1}(H_2-\frac{1}{2}H_3-\frac{1}{2}H_1-\frac{3}{4})(H_2-\frac{1}{2}H_3-\frac{1}{2}H_1-\frac{1}{4})),\\
(-a(H_2-\frac{1}{2}H_3-\frac{1}{2}H_1+\frac{3}{4})(H_2-\frac{1}{2}H_3-\frac{1}{2}H_1+\frac{1}{4})£,\,\,a^{-1}),
\end{array}\right.
\end{equation}
where $a\in\C^*$.
Compare (\ref{R5.2}) with (\ref{R5.3}), we have
\[
(\deg_1\S_2,\deg_1\P_2)=(0,1)=(0,2),
\]
or
\[
(\deg_1\S_2,\deg_1\P_2)=(1,0)=(2,0).
\]
This is absurd. Hence, $M$ does not exist, that is $\H(\g)=\varnothing$.
\end{proof}

Thus, for Kac-Moody algebras of finite type, we only have  type $C_l \,\, (l>2)$ left. The complete list of modules in $\H(C_l)$ is as follows, which were not explicitly given by Nilsson in \cite{N1}%For this case  we have

\begin{proposition}\label{C}
Let $\g$ be a Kac-Moody algebra of type $C_l\,\,(l\geq 2)$. For any $\a=(a_1,\cdots,a_l)\in(\C^*)^l, S\subseteq\l$, the space $M(\a,S)=\C[H_1,\cdots,H_l]$ becomes a $\g$-module under the following action for all $g\in\C[H_1,\cdots,H_l], 1\leq k\leq l-2$:
\[\begin{aligned}
H_i\cdot g&=H_ig, i\in\l;\\
e_k\cdot g&=\hskip-0.1cm\left\{\begin{array}{ll}
\hskip-0.3cm a_k(H_{k+1}-H_k+\frac{1}{2})\s_k(g), &\hskip-0.2cm k,k+1\in S,\\
\hskip-0.3cm a_k\s_k(g), &\hskip-0.2cm k\in S,k+1\not\in S,\\
\hskip-0.3cm a_k(H_k-H_{k-1}-\frac{1}{2})(H_{k+1}-H_k+\frac{1}{2})\s_k(g), &\hskip-0.2cm k\not\in S,k+1\in S,\\
\hskip-0.3cm a_k(H_k-H_{k-1}-\frac{1}{2})\s_k(g), &\hskip-0.2cm k,k+1\not\in S;
\end{array}\right.\end{aligned}\]
\[\begin{aligned}
f_k\cdot g&=\hskip-0.1cm\left\{\begin{array}{ll}
\hskip-0.3cm a_k^{-1}(H_k-H_{k-1}+\frac{1}{2})\s_k^{-1}(g), &\hskip-0.2cm k,k+1\in S,\\
\hskip-0.3cm a_k^{-1}(H_k-H_{k-1}+\frac{1}{2})(H_{k+1}-H_k-\frac{1}{2})\s_k^{-1}(g), &\hskip-0.2cm k\in S,k+1\not\in S,\\
\hskip-0.3cm a_k^{-1}\s_k^{-1}(g), &\hskip-0.2cm k\not\in S,k+1\in S,\\
\hskip-0.3cm a_k^{-1}(H_{k+1}-H_k-\frac{1}{2})\s_k^{-1}(g), &\hskip-0.2cm k,k+1\not\in S;
\end{array}\right.\\
e_{l-1}\cdot g&=\hskip-0.1cm\left\{\begin{array}{ll}
\hskip-0.3cm a_{l-1}(2H_l-H_{l-1}+\frac{1}{2})\s_{l-1}(g), &\hskip-0.2cm l-1,l\in S,\\
\hskip-0.3cm a_{l-1}\s_{l-1}(g), &\hskip-0.2cm l-1\in S, l\not\in S,\\
\hskip-0.3cm a_{l-1}(H_{l-1}-H_{l-2}-\frac{1}{2})(2H_{l}-H_{l-1}+\frac{1}{2})\s_{l-1}(g), &\hskip-0.2cm l-1\not\in S, l\in S,\\
\hskip-0.3cm a_{l-1}(H_{l-1}-H_{l-2}-\frac{1}{2})\s_{l-1}(g), &\hskip-0.2cm l-1,l\not\in S;
\end{array}\right.\\
f_{l-1}\cdot g&=\hskip-0.1cm\left\{\begin{array}{ll}
\hskip-0.3cm a_{l-1}^{-1}(H_{l-1}-H_{l-2}+\frac{1}{2})\s_{l-1}^{-1}(g), &\hskip-0.2cm l-1,l\in S,\\
\hskip-0.3cm a_{l-1}^{-1}(H_{l-1}-H_{l-2}+\frac{1}{2})(2H_{l}-H_{l-1}-\frac{1}{2})\s_{l-1}^{-1}(g), &\hskip-0.2cm l-1\in S, l\not\in S,\\
\hskip-0.3cm a_{l-1}^{-1}\s_{l-1}^{-1}(g), &\hskip-0.2cm l-1\not\in S, l\in S,\\
\hskip-0.3cm a_{l-1}^{-1}(2H_l-H_{l-1}-\frac{1}{2})\s_{l-1}^{-1}(g), &\hskip-0.2cm l-1,l\not\in S;
\end{array}\right.\\
e_l\cdot g&=\hskip-0.1cm\left\{\begin{array}{ll}
\hskip-0.3cm a_l\s_l(g), &\hskip-0.2cm l\in S,\\
\hskip-0.3cm-a_l(H_l-\frac{1}{2}H_{l-1}-\frac{3}{4})(H_l-\frac{1}{2}H_{l-1}-\frac{1}{4})\s_l(g), &\hskip-0.2cm l\not\in S;
\end{array}\right.\\
f_l\cdot g&=\hskip-0.1cm\left\{\begin{array}{ll}
\hskip-0.3cm-a_l^{-1}(H_l-\frac{1}{2}H_{l-1}+\frac{3}{4})(H_l-\frac{1}{2}H_{l-1}+\frac{1}{4})\s_l^{-1}(g), &\hskip-0.2cm l\in S,\\
\hskip-0.3cm a_l^{-1}\s_l^{-1}(g), &\hskip-0.2cm l\not\in S.
\end{array}\right.
\end{aligned}\]
where $H_0:=0$. And we have $\H(\g)=\{M(\a,S)|\a\in(\C^*)^l,S\subseteq\l\}.$
Moreover, the module  $M(\a,S)$ is simple for any $\a$ and $S$.
\end{proposition}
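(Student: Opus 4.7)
The plan is to prove the three claims in order: (a) each $M(\a,S)$ is a well-defined $\g$-module; (b) every $M\in\H(\g)$ is some $M(\a,S)$; (c) each $M(\a,S)$ is simple.

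For (a), the Cartan relations $[H_i,e_j]=\delta_{ij}e_j$, $[H_i,f_j]=-\delta_{ij}f_j$ and $[e_i,f_j]=\delta_{ij}\alpha_i^\vee$ are immediate from the formulas. The remaining Kac--Moody relations split into two families. Those involving only nodes $\{1,\ldots,l-1\}$ follow from Lemma 17(1) applied to the $A_{l-1}$-subalgebra, since the given formulas for $(\S_k,\P_k)$ with $k\le l-1$ reproduce the $A_{l-1}$ formulas for a particular constant value of the parameter $b$. Those involving nodes $\{l-1,l\}$ follow from Example 7 together with Lemma 17(2). The remaining relations $[e_l,f_j]=0$, $(\ad e_l)^{1-a_{lj}}(e_j)=0$ and their $f$-duals for $j\le l-2$ reduce to the observation that $\s_l$ commutes with the polynomial factors appearing in $\S_j,\P_j$ (which involve only $H_1,\ldots,H_{l-1}$), so these relations hold automatically.

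For (b), let $M\in\H(\g)$, and write $e_i\cdot g=\S_i\s_i(g)$, $f_i\cdot g=\P_i\s_i^{-1}(g)$ as in Lemma 16. Restrict $M$ to the subalgebra $\g_1$ generated by $\{e_i,f_i:i\le l-1\}$ together with $\h$; this is isomorphic to $\sl_l\oplus\C z$ for some $z\in\h$ centralizing the $\sl_l$-summand. Lemma 17(1), applied with $Z=\C z$, forces $(\S_k,\P_k)_{k\le l-1}$ into one of the $A_{l-1}$-forms with parameters $a_1,\ldots,a_{l-1}\in\C^*$, $b\in\C[z]\cong\C[H_l]$, and $S_1\subseteq\{1,\ldots,l\}$. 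Next restrict to the $C_2$-subalgebra $\g_2$ at nodes $\{l-1,l\}$, together with its $(l-2)$-dimensional centralizer in $\h$; the $\tilde\g$-version of Theorem 11 (proved by the same algebraic-closure argument as Lemma 17) yields a $C_2$-form for $(\S_{l-1},\P_{l-1},\S_l,\P_l)$ depending on parameters $a'_{l-1},a_l\in\C^*$ and $S_2\subseteq\{1,2\}$. Matching the two descriptions of $(\S_{l-1},\P_{l-1})$ then pins down $b$ as the specific constant appearing in the $C_l$-formulas, identifies $a'_{l-1}$ with $a_{l-1}$, and merges $S_1,S_2$ into a single $S\subseteq\l$.

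The main obstacle lies in this matching: one must enumerate the four possible $A_{l-1}$-forms and the four possible $C_2$-forms of $(\S_{l-1},\P_{l-1})$ and rule out all but the compatible ones, using degree considerations from Lemma 1 together with the relations $[e_{l-1},f_l]\cdot 1=0$ and $[e_l,f_{l-1}]\cdot 1=0$, exactly in the spirit of the case analysis carried out in Theorem 11. For (c), let $N\subseteq M(\a,S)$ be a nonzero submodule, and choose $v\in N$ of minimal total degree $d$. If $d>0$, then iterated applications of the $e_i$'s and $f_i$'s produce elements with strictly smaller $\deg_i$ via the operator $\s_i-\id$ (Lemma 1), contradicting minimality. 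Hence $d=0$, so $N$ contains a nonzero scalar, and the freeness of $M(\a,S)$ as a $U(\h)$-module yields $N=M(\a,S)$. The absence of a free parameter $b$, in contrast with the $A_l$-case of Example 1, means that no integrality obstruction arises, so every $M(\a,S)$ is simple.
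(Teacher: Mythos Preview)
Your outline for (a) and (b) matches the paper's approach. The paper does not spell out (a), but your verification is correct. For (b), the paper restricts to exactly the subalgebras $\g_1\cong\sl_l\oplus\C H_l$ and $\g_2\cong\mathfrak{o}_5\oplus\C H_1\oplus\cdots\oplus\C H_{l-2}$ and invokes Lemma~\ref{AXLEM}; the matching step is simpler than you suggest, since both restrictions produce explicit polynomial formulas for $(\S_{l-1},\P_{l-1})$ and equating them immediately gives $b=-\tfrac{2}{l}H_l-\tfrac12$. No separate appeal to $[e_{l-1},f_l]\cdot1=0$ or $[e_l,f_{l-1}]\cdot1=0$ is needed, as those relations are already encoded in Lemma~\ref{AXLEM}.

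Your argument for (c) has a genuine gap. The minimal-total-degree approach fails as written: since $e_i\cdot g=\S_i\,\s_i(g)$ with $\S_i$ typically non-constant, the best element of the submodule you can produce from $g$ is $e_i\cdot g-\S_i g=\S_i(\s_i-\id)(g)$, and the polynomial factor $\S_i$ restores the degree that $\s_i-\id$ removed (both in total degree and, when $\deg_i\S_i=1$, in $\deg_i$). So no contradiction with minimality arises. The paper instead fixes one representative, $M(\mathbf 1,\l)$, and kills the variables one at a time: here $e_l$ acts as $\s_l$ with \emph{no} polynomial factor, so $\Delta_l:=e_l-1$ genuinely reduces $\deg_l$ until one reaches some nonzero $g_1\in\cP_l$; then on $\cP_l$ the composite $\Delta_l\circ e_{l-1}$ sends $g_1$ to a nonzero scalar multiple of $\s_{l-1}(g_1)$ (because $e_{l-1}\cdot g_1=(2H_l-H_{l-1}+\tfrac12)\s_{l-1}(g_1)$ is linear in $H_l$ and $\Delta_l$ strips off that factor), whence $\s_{l-1}(g_1)-g_1$ lies in the submodule and one iterates into $\cP_l\cap\cP_{l-1}$; the same device propagates down to a nonzero scalar. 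This recursive construction of composite ``effective $\s_i$'' operators is the missing ingredient in your sketch.
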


\begin{proof}
Suppose the Dynkin diagram of $\g$ is as follows.
\[
\begin{tikzpicture}[scale=0.7]
    \draw (1,0) -- (2,0);
    \draw (2,0) -- (3,0);
    \draw (4,0) -- (5,0);
    \draw (5,0.1) -- (6,0.1);
    \draw (5,-0.1) -- (6,-0.1);
    \draw (5.6,0.2) -- (5.4,0) -- (5.6,-0.2);
    \filldraw[color=black] (1,0) circle (5pt) node[below=2pt]{\small 1};
    \filldraw[color=white] (1,0) circle (4pt);
    \filldraw[color=black] (2,0) circle (5pt) node[below=2pt]{\small 2};
    \filldraw[color=white] (2,0) circle (4pt);
    \filldraw[color=black] (5,0) circle (5pt) node[below=2pt]{\small $l-1$};
    \filldraw[color=white] (5,0) circle (4pt);
    \filldraw[color=black] (6,0) circle (5pt) node[below=2pt]{\small $l$};
    \filldraw[color=white] (6,0) circle (4pt);
    \filldraw (3,0) node[right]{$\cdots$};
\end{tikzpicture}
\]
The statement for $l=2$ is just Theorem \ref{RANK2}(2). Now we assume that $l\geq3$ and $M\in\H(\g)$. Let $\g_1$ be the subalgebra of $\g$ generated by $\{e_i,f_i,\h|i=1,\cdots,l-1\}$  and $\g_2$ be the subalgebra of $\g$ generated by $\{e_{l-1},e_l,f_{l-1},f_l,\h\}$. Then
\[\begin{aligned}
&\g_1\cong\mathfrak{sl}_l(\C)\oplus\C H_l, [\sl_{l}(\C),H_l]=0;\\
&\g_2\cong\mathfrak{o}_5(\C)\oplus\C H_1\cdots\oplus\C H_{l-2}, [\mathfrak{o}_5(\C),H_i]=0, i=1,\cdots,l-2.
\end{aligned}\]
Let $M_i=\mathrm{Res}_{\g_i}^{\g}M, i=1,2$. Let $\h_1$ be the subalgebra of $\h$ generated by $\{\alpha_i^\vee|i=1,\cdots, l-1\}$ and $\h_2$ be the subalgebra of $\h$ generated by $\{\alpha_{l-1}^\vee,\alpha_l^\vee\}$.

Take $\bar{H}_i$ in $\h_1$ such that
\[
[\bar{H}_i,e_j]=\delta_{ij}e_j, [\bar{H}_i,f_j]=-\delta_{ij}f_j, 1\leq i,j\leq l-1.
\]
Then
\[
\bar{H}_i=H_i-\frac{2i}{l}H_l.
\]
Following from Lemma \ref{AXLEM}(1), on $M_1$, we have
\[
(\S_i,\P_i)={\tiny \left\{\begin{array}{ll}
(a_i(H_{i+1}-H_i-\frac{2}{l}H_l-b),a_i^{-1}(H_i-H_{i-1}-\frac{2}{l}H_l-b)), & i,i+1\in S,\\
(a_i,a_i^{-1}(H_i-H_{i-1}-\frac{2}{l}H_l-b)(H_{i+1}-H_i-\frac{2}{l}H_l-b-1)), & i\in S,i+1\not\in S,\\
(a_i(H_i-H_{i-1}-\frac{2}{l}H_l-b-1)(H_{i+1}-H_i-\frac{2}{l}H_l-b),a_i^{-1}), & i\not\in S,i+1\in S,\\
(a_i(H_i-H_{i-1}-\frac{2}{l}H_l-b-1),a_i^{-1}(H_{i+1}-H_i-\frac{2}{l}H_l-b), & i,i+1\not\in S,
\end{array}\right.}
\]
for $1\leq i\leq l-2$, and
\begin{equation}\label{R5.4}\begin{aligned}
&(\S_{l-1},\P_{l-1})=\\
&{\tiny \left\{\begin{array}{ll}
(a_{l-1}(\frac{2(l-1)}{l}H_l-H_{l-1}-b),a_{l-1}^{-1}(H_{l-1}-H_{l-2}-\frac{2}{l}H_l-b)), & l-1,l\in S,\\
(a_{l-1},a_{l-1}^{-1}(H_{l-1}-H_{l-2}-\frac{2}{l}H_l-b)(\frac{2(l-1)}{l}H_l-H_{l-1}-b-1)), & l-1\in S, l\not\in S,\\
(a_{l-1}(H_{l-1}-H_{l-2}-\frac{2}{l}H_l-b-1)(\frac{2(l-1)}{l}H_l-H_{l-1}-b),a_{l-1}^{-1}), & l-1\not\in S, l\in S,\\
(a_{l-1}(H_{l-1}-H_{l-2}-\frac{2}{l}H_l-b-1),a_{l-1}^{-1}(\frac{2(l-1)}{l}H_l-H_{l-1}-b-1)), & l-1,l\not\in S.
\end{array}\right.}
\end{aligned}\end{equation}

Similarly, on $M_2$, by Lemma \ref{AXLEM}(2), we have
\begin{equation}\label{R5.5}\begin{aligned}
(\S_{l-1},\P_{l-1})&={\tiny\left\{\begin{array}{ll}
(a_{l-1}(2H_l-H_{l-1}+\frac{1}{2}),a_{l-1}^{-1}(H_{l-1}-H_{l-2}+\frac{1}{2})), & l-1,l\in S,\\
(a_{l-1},a_{l-1}^{-1}(H_{l-1}-H_{l-2}+\frac{1}{2})(2H_l-H_{l-1}-\frac{1}{2})), & l-1\in S, l\not\in S,\\
(a_{l-1}(H_{l-1}-H_{l-2}-\frac{1}{2})(2H_l-H_{l-1}+\frac{1}{2}),a_{l-1}^{-1}), & l-1\not\in S, l\in S,\\
(a_{l-1}(H_{l-1}-H_{l-2}-\frac{1}{2}),a_{l-1}^{-1}(2H_l-H_{l-1}-\frac{1}{2})), & l-1,l\not\in S;
\end{array}\right.}\\
(\S_l,\P_l)&=\left\{\begin{array}{ll}
(a_l,-a_l^{-1}(H_l-\frac{1}{2}H_{l-1}+\frac{3}{4})(H_l-\frac{1}{2}H_{l-1}+\frac{1}{4})), & l\in S,\\
(-a_l(H_l-\frac{1}{2}H_{l-1}-\frac{3}{4})(H_l-\frac{1}{2}H_{l-1}-\frac{1}{4}),a_l^{-1}), & l\not\in S.
\end{array}\right.
\end{aligned}\end{equation}
Compare (\ref{R5.4}) with (\ref{R5.5}),  we get
\[
b=-\frac{2}{l}H_l-\frac{1}{2},
\]
and hence $M\cong M(\a,S)$ for some $\a\in(\C^*)^l$ and $S\subseteq\l$.

For irreducibility, we only show that $M(\mathbf{1},\l)$ is simple for the others can be proved in similar ways.

Define the following operators on $M(\mathbf{1},\l)$:
\[
\Delta_i:=e_i-1, i\in\l.
\]
Let $W\subseteq M(\mathbf{1},\l)$ be a nonzero submodule and take $0\neq g\in W$. Applying $\Delta_l$ finite times, we have
\[
g_1\in W\cap(\cP_l\setminus\{0\}).
\]
Then applying $\Delta_l\circ e_{l-1}$, we get
\[
\s_{l-1}(g_1)\in W.
\]
Hence
\[
\s_{l-1}(g_1)-g_1\in W.
\]
Thus, we have
\[
0\neq g_2\in W\cap\cP_l\cap\cP_{l-1}.
\]
Continue this process by using $\Delta_i\circ e_{i-1}$ in turn, we know that there exists $g'\in\C^*\cap W$. Therefore, $1\in W$ and hence $W=M(\mathbf{1},\l)$.
\end{proof}

Following from Example 1, Corollary \ref{DE}, Lemma \ref{B,F} and Proposition \ref{C}, we have
\begin{theorem}\label{FINITE}
Let $\g$ be a Kac-Moody algebra of finite type. Then $\H(\g)\neq\varnothing$ if and only if $\g$ is of type $A_l\,\,(l\geq1)$ or $C_l\,\,(l\geq2)$.
\end{theorem}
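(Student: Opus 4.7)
The proof is essentially an assembly of the results already established for each type in the Killing--Cartan classification, together with the one rank-two case not yet explicitly addressed, namely $G_2$.

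First I would recall the standard list of indecomposable generalized Cartan matrices of finite type, which is exactly $A_l\ (l\geq1),\ B_l\ (l\geq2),\ C_l\ (l\geq2),\ D_l\ (l\geq4),\ E_l\ (l=6,7,8),\ F_4,\ G_2$, and then treat each type separately.

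For the ``if'' direction: when $\g$ is of type $A_l$ with $l\geq1$, Example 1 exhibits the modules $M(\a,b,S)$, so $\H(\g)\neq\varnothing$; when $\g$ is of type $C_l$ with $l\geq2$, Proposition \ref{C} (together with the rank-2 case $B_2\cong C_2$ handled in Theorem \ref{RANK2}(2)) exhibits the modules $M(\a,S)$, so $\H(\g)\neq\varnothing$.

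For the ``only if'' direction, I would rule out each remaining type. If $\g$ is of type $D_l$ with $l\geq4$ or of type $E_l$ with $l=6,7,8$, its Dynkin diagram contains a subdiagram of type $D_4$, so by Corollary \ref{DE} we have $\H(\g)=\varnothing$. If $\g$ is of type $B_l$ with $l\geq3$ or of type $F_4$, then by Lemma \ref{B,F} we have $\H(\g)=\varnothing$. The only case not yet covered is $G_2$, which is a rank-$2$ Kac-Moody algebra associated to the generalized Cartan matrix $A(1,3)=\left(\begin{smallmatrix}2 & -1\\ -3 & 2\end{smallmatrix}\right)$. Since $rs=3>2$, Proposition 6 (the rank-$2$ dichotomy proved in Section 4) gives $\H(G_2)=\varnothing$.

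Combining these observations, $\H(\g)\neq\varnothing$ precisely when $\g$ is of type $A_l$ with $l\geq1$ or $C_l$ with $l\geq2$. I do not anticipate any substantive obstacle, as every case has already been settled earlier in the paper; the only content is recognizing that $G_2$ falls under the rank-$2$ classification with $rs=3$, and that the $B_2$ case is absorbed into $C_2$ via the isomorphism $B_2\cong C_2$.
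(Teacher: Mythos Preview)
Your proposal is correct and follows essentially the same approach as the paper, which simply states that the theorem follows from Example 1, Corollary \ref{DE}, Lemma \ref{B,F}, and Proposition \ref{C}. Your write-up is in fact more careful than the paper's one-line citation: you explicitly treat $G_2$ via the rank-$2$ classification (Proposition 6 with $rs=3$), a case the paper's list of references technically omits, and you note that $B_2$ is absorbed into the $C_l$ family.
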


\section{Modules for general Kac-Moody algebras}

In this section, we determine the category $\H(\g)$ for general Kac-Moody algebras $\g$. Indeed, we will prove
\begin{theorem}%\label{GENERAL}
Let $\g$ be an arbitrary  Kac-Moody algebra. Then $\H(\g)\neq\varnothing$ if and only if $\g$ is of finite type $A_l\,\,(l\geq1)$ or $C_l\,\,(l\geq2)$.
\end{theorem}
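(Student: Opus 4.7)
The plan is to assemble Theorem 3 (affine case), Theorem 11 (rank-2 case), and the subdiagram-exclusion Lemmas 18 and 19 into a combinatorial classification of connected generalized Cartan matrices $A$ admitting $\H(\g(A)) \neq \varnothing$. The ``if'' direction is already handled: the explicit modules of Example 1 and Proposition 20 exhibit $\H(\g) \neq \varnothing$ whenever $\g$ is of type $A_l$ or $C_l$. So I focus on the ``only if'' direction: assuming $M \in \H(\g)$, I rule out every Dynkin diagram other than those of $A_l$ and $C_l$.

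First I record three subdiagram-obstructions. (a) By Lemma 18, $\Gamma(A)$ has no $D_4$-subdiagram, so every vertex has degree at most $2$, and hence $\Gamma(A)$ is either a path or a cycle. (b) $\Gamma(A)$ has no $B_3$-subdiagram: the ``Lemma 18-style'' argument used in the proof of Lemma 19 works verbatim, restricting to the subalgebra $\g'$ generated by the three relevant Chevalley pairs together with $\h$ (which is exactly the $\tilde{\g}$-construction from Section 2 applied to $\g(B_3)$) and invoking Lemma 17(3) to conclude $\H(\g') = \varnothing$. (c) No edge $(i,j)$ of $\Gamma(A)$ satisfies $a_{ij}a_{ji} \geq 3$: the subalgebra of $\g$ generated by $e_i, f_i, e_j, f_j$ and $\h$ matches the $\tilde{\g}$-construction with base $\g(A|_{\{i,j\}})$ after writing $\h = \C\alpha_i^\vee \oplus \C\alpha_j^\vee \oplus Z$ with $Z \subseteq \ker\alpha_i \cap \ker\alpha_j$, so Theorem 11 combined with Lemma 17(3) forces $\H = \varnothing$; the degenerate $a_{ij}a_{ji}=4$ case is absorbed directly into Theorem 3 via the rank-$2$ affine subalgebra.

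Under these three exclusions a direct enumeration finishes the proof. A cycle with only single edges is of affine type $A_{l-1}^{(1)}$, excluded by Theorem 3; a cycle with any double edge contains a $B_3$-subdiagram on three consecutive vertices, excluded by (b). A path with only single edges is $A_l$, allowed. A path with a double edge strictly interior to the chain, or at an endpoint with the arrow pointing outward, contains a $B_3$-subdiagram on three consecutive vertices, excluded by (b). A path with a unique double edge at an endpoint oriented inward is $C_l$, allowed. A path with double edges at both endpoints oriented inward is $C_l^{(1)}$, affine and excluded by Theorem 3. This exhausts all diagrams and forces $\g$ to be of type $A_l$ or $C_l$.

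The main obstacle is the clean execution of principle (c): one must identify the subalgebra generated by two Chevalley pairs and $\h$ inside a general $\g$ as a direct sum of $\g(A|_{\{i,j\}})$ and an abelian ideal commuting with the Chevalley generators, and this requires a short linear-algebra check on $\alpha_i, \alpha_j \in \h^*$ that is smoothest when $A|_{\{i,j\}}$ is invertible. The non-invertible sub-case ($rs = 4$) needs separate treatment via the direct affine embedding and Theorem 3. Once this identification is carried out, the combinatorial case analysis on $\Gamma(A)$ is a straightforward enumeration based on the three exclusion principles.
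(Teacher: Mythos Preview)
Your overall strategy matches the paper's, but there is a genuine gap in the cycle case. The claim that ``a cycle with any double edge contains a $B_3$-subdiagram on three consecutive vertices'' fails when the cycle is a triangle: in a $3$-cycle the only $3$-vertex subdiagram is the triangle itself, which is not a path and hence not of type $B_3$. None of your exclusion principles (a)--(c) eliminates such triangles---they have all vertices of degree $2$, contain no $D_4$ or $B_3$ subdiagram, have every edge with $a_{ij}a_{ji}\le 2$, and their Cartan matrices are invertible of indefinite type, so Theorem~3 does not apply either. Up to relabelling there are six such triangles (one, two, or three double edges with the various arrow orientations), and the paper handles them in a separate lemma by explicit computation: for each triangle one restricts $M$ to two different rank-$2$ subalgebras $\g_k$ (of type $A_2$ or $B_2$, extended by the remaining piece of $\h$), reads off the possible forms of $(E_i,F_i)$ from Lemma~\ref{AXLEM}, and compares the resulting values of $(\deg_j E_i,\deg_j F_i)$ to obtain a contradiction. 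You will need an argument of this kind to close the gap; there is no shortcut via subdiagrams.

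A smaller point: in (a) you deduce ``every vertex has degree at most $2$'' from the absence of a $D_4$-subdiagram alone, but this is only immediate in the simply-laced case. A vertex with three neighbours, one joined by a double edge with the arrow pointing toward the centre, yields neither $D_4$ nor $B_3$ on any three of the four vertices; to exclude it one must also invoke affine subdiagrams such as $B_3^{(1)}$ or its dual on the full four-vertex claw. The paper covers this implicitly by listing several small affine types among the forbidden subdiagrams, not just $D_4$.
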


Let $\g$ be a Kac-Moody algebra associtaed to Cartan matrix $A=(a_{ij})$ with Dynkin diagram $\Gamma$ and $\H(\g)\neq\varnothing$. Then using Theorem \ref{AFF}, Theorem \ref{RANK2}, Lemma \ref{AXLEM} and Theorem \ref{FINITE}, $\Gamma$ has the following properties:
\begin{itemize}
\item $\Gamma$ cannot contain a triple line, a quadruple line, a bold line or a double arrow line, i.e., $a_{ij}a_{ji}\le2$. See Theorem \ref{RANK2}.
\item $\Gamma$ cannot contain a subdiagram of type $D_4,B_3^{(1)},A_3^{(2)},C_2^{(1)},A_4^{(2)}$ and $D_3^{(2)}$. See Theorem \ref{AFF}.
%\item $\Gamma$ contains at most one double line when it contains more than 3 points: This is because when $\Gamma$ contains more than two double lines, then it contains a subdiagram of type $C_l^{(1)}, A_{2l}^{(2)}$ or $D_{l+1}^{(2)}$ which has $\H=\varnothing$ by Theorem \ref{AFF}.
\item If $\Gamma$ is simple-laced, then $\Gamma$ has to be of type $A_l$: this follows from the fact that $\H=\varnothing$ for Kac-Moody algebras of type $A_l^{(1)}$.
\item If $\Gamma$ has more than 3 points without a 3-point circle and contains a double line, then $\Gamma$ has to be of type $C_l(l\geq4)$: otherwise, $\Gamma$ will contain a subdiagram of type $B_3$ or affine type which has $\H=\varnothing$.
\end{itemize}

To complete the proof of Theorem 20, we only need to prove the following lemma which consists of all possible 3-point circle Dynkin diagrams with at least one double line.
\begin{lemma}
Let $\g$ be a Kac-Moody algebra with Dynkin diagram $\Gamma$. If $\Gamma$ is one of the following,
\[
\begin{tikzpicture}[scale=0.7]
    \draw (1,0.1) -- (2,0.1);
    \draw (1,-0.1) -- (2,-0.1);
    \draw (1.4,0.2) -- (1.6,0) -- (1.4,-0.2);
    \draw (1,0)--(1.5,1);
    \draw (2,0)--(1.5,1);
    \filldraw[color=black] (1,0) circle (5pt) node[below=2pt]{\small 2};
    \filldraw[color=white] (1,0) circle (4pt);
    \filldraw[color=black] (2,0) circle (5pt) node[below=2pt]{\small 3};
    \filldraw[color=white] (2,0) circle (4pt);
    \filldraw[color=black] (1.5,1) circle (5pt) node[above=2pt]{\small 1};
    \filldraw[color=white] (1.5,1) circle (4pt);
    \filldraw (2.5,0) node{$,$};
    \draw (3,0.1) -- (4,0.1);
    \draw (3,-0.1) -- (4,-0.1);
    \draw (3.4,0.2) -- (3.6,0) -- (3.4,-0.2);
    \draw (2.9,0) -- (3.4,1);
    \draw (3.1,0) -- (3.6,1);
    \draw (3.1,0.7) -- (3.2,0.4) -- (3.4,0.5);
    \draw (4,0) -- (3.5,1);
    \filldraw[color=black] (3,0) circle (5pt) node[below=2pt]{\small 2};
    \filldraw[color=white] (3,0) circle (4pt);
    \filldraw[color=black] (4,0) circle (5pt) node[below=2pt]{\small 3};
    \filldraw[color=white] (4,0) circle (4pt);
    \filldraw[color=black] (3.5,1) circle (5pt) node[above=2pt]{\small 1};
    \filldraw[color=white] (3.5,1) circle (4pt);
    \filldraw (4.5,0) node{$,$};
    \draw (5,0.1) -- (6,0.1);
    \draw (5,-0.1) -- (6,-0.1);
    \draw (5.6,0.2) -- (5.4,0) -- (5.6,-0.2);
    \draw (4.9,0) -- (5.4,1);
    \draw (5.1,0) -- (5.6,1);
    \draw (5.1,0.7) -- (5.2,0.4) -- (5.4,0.5);
    \draw (6,0) -- (5.5,1);
    \filldraw[color=black] (5,0) circle (5pt) node[below=2pt]{\small 2};
    \filldraw[color=white] (5,0) circle (4pt);
    \filldraw[color=black] (6,0) circle (5pt) node[below=2pt]{\small 3};
    \filldraw[color=white] (6,0) circle (4pt);
    \filldraw[color=black] (5.5,1) circle (5pt) node[above=2pt]{\small 1};
    \filldraw[color=white] (5.5,1) circle (4pt);
    \filldraw (6.5,0) node{$,$};
    \draw (7,0.1) -- (8,0.1);
    \draw (7,-0.1) -- (8,-0.1);
    \draw (7.4,0.2) -- (7.6,0) -- (7.4,-0.2);
    \draw (6.9,0) -- (7.4,1);
    \draw (7.1,0) -- (7.6,1);
    \draw (7,0.5) -- (7.3,0.6) -- (7.4,0.3);
    \draw (8,0) -- (7.5,1);
    \filldraw[color=black] (7,0) circle (5pt) node[below=2pt]{\small 2};
    \filldraw[color=white] (7,0) circle (4pt);
    \filldraw[color=black] (8,0) circle (5pt) node[below=2pt]{\small 3};
    \filldraw[color=white] (8,0) circle (4pt);
    \filldraw[color=black] (7.5,1) circle (5pt) node[above=2pt]{\small 1};
    \filldraw[color=white] (7.5,1) circle (4pt);
    \filldraw (8.5,0) node{$,$};
    \draw (9,0.1) -- (10,0.1);
    \draw (9,-0.1) -- (10,-0.1);
    \draw (9.4,0.2) -- (9.6,0) -- (9.4,-0.2);
    \draw (8.9,0) -- (9.4,1);
    \draw (9.1,0) -- (9.6,1);
    \draw (9.1,0.7) -- (9.2,0.4) -- (9.4,0.5);
    \draw (9.9,0) -- (9.4,1);
    \draw (10.1,0) -- (9.6,1);
    \draw (9.6,0.4) -- (9.65,0.7) -- (9.9, 0.6);
    \filldraw[color=black] (9,0) circle (5pt) node[below=2pt]{\small 2};
    \filldraw[color=white] (9,0) circle (4pt);
    \filldraw[color=black] (10,0) circle (5pt) node[below=2pt]{\small 3};
    \filldraw[color=white] (10,0) circle (4pt);
    \filldraw[color=black] (9.5,1) circle (5pt) node[above=2pt]{\small 1};
    \filldraw[color=white] (9.5,1) circle (4pt);
    \filldraw (10.5,0) node{$,$};
    \draw (11,0.1) -- (12,0.1);
    \draw (11,-0.1) -- (12,-0.1);
    \draw (11.4,0.2) -- (11.6,0) -- (11.4,-0.2);
    \draw (10.9,0) -- (11.4,1);
    \draw (11.1,0) -- (11.6,1);
    \draw (11.1,0.7) -- (11.2,0.4) -- (11.4,0.5);
    \draw (11.9,0) -- (11.4,1);
    \draw (12.1,0) -- (11.6,1);
    \draw (11.5,0.6) -- (11.75,0.5) -- (11.9, 0.7);
    \filldraw[color=black] (11,0) circle (5pt) node[below=2pt]{\small 2};
    \filldraw[color=white] (11,0) circle (4pt);
    \filldraw[color=black] (12,0) circle (5pt) node[below=2pt]{\small 3};
    \filldraw[color=white] (12,0) circle (4pt);
    \filldraw[color=black] (11.5,1) circle (5pt) node[above=2pt]{\small 1};
    \filldraw[color=white] (11.5,1) circle (4pt);
\end{tikzpicture}
\]
then $\H(\g)=\varnothing$.
\end{lemma}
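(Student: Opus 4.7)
The plan is to adapt the restriction-to-rank-2-subalgebras technique that was already used for $D_4$ in Proposition 16 and for $B_3$ in Lemma 19. Fix one of the six listed diagrams $\Gamma$ and assume for contradiction that there exists $M\in\H(\g)$. By Lemma 17, the module structure is encoded by the six polynomials $\S_i=e_i\cdot 1$ and $\P_i=f_i\cdot 1$ in $\C[H_1,H_2,H_3]$, for $i=1,2,3$.

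For each unordered pair $\{i,j\}\subseteq\{1,2,3\}$ I would form the subalgebra $\g_{ij}$ generated by $e_i,f_i,e_j,f_j$ and $\h$. Because $\dim\h=3$ equals the rank of $A$, this $\g_{ij}$ splits as a rank-2 Kac-Moody algebra on the simple coroots $\alpha_i^\vee,\alpha_j^\vee$ together with a one-dimensional abelian factor inside $\h$ that commutes with the generators. Hence $\mathrm{Res}_{\g_{ij}}^{\g}M\in\H(\g_{ij})$, and Lemma 18 (combined with Theorem 11 for the relevant rank-2 Cartan matrix) gives a finite explicit list of admissible forms for $(\S_i,\P_i,\S_j,\P_j)$ as polynomials in $H_1,H_2,H_3$, where the parameter $b$ of Theorem 11 is allowed to lie in the polynomial ring in the ``transverse'' variable $H_k$ ($k\neq i,j$) coming from the abelian factor.

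Next I would play the three restrictions off against one another: the same pair $(\S_i,\P_i)$ must simultaneously appear in the list produced by $\g_{ij}$ and in the list produced by $\g_{ik}$. Reading off from these lists the forced values of $\deg_k\S_i$ and $\deg_k\P_i$ leads, exactly as in the step comparing (\ref{R5.4}) and (\ref{R5.5}) in the proof of Proposition 18, to a sharp numerical constraint. For the first diagram (two single edges and one double edge) the conflict should arise between the two $A_2$-type restrictions $\g_{12}$ and $\g_{13}$ (which only allow linear factors of degree one in the transverse variable) and the $B_2$-type restriction $\g_{23}$ (which, for some subsets $S\subseteq\{1,2\}$ in Example 15, produces a quadratic factor). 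For the five triangle-of-double-edges diagrams, all three restrictions are of $B_2$-type, and the quadratic factors they produce involve the three distinct linear combinations of $H_1,H_2,H_3$ dictated by the three different arrow directions; these cannot all match a single pair $(\S_i,\P_i)$.

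The main obstacle is the case analysis rather than any one hard calculation. For each of the six diagrams one has to tabulate the admissible shapes of $(\S_i,\P_i)$ coming from $\g_{ij}$ and from $\g_{ik}$ and check incompatibility; for the three-double-edge diagrams this in principle requires running over the four choices of $S\subseteq\{1,2\}$ in Example 15 for each restriction, so a total of $16$ combinations per pair. Once one observes that the three quadratic ``shifted'' variables produced by the three $B_2$-subalgebras lie in three distinct one-dimensional subspaces of $\C H_1+\C H_2+\C H_3$, the degree and leading-coefficient comparison rules out every subcase, yielding the desired contradiction and hence $\H(\g)=\varnothing$.
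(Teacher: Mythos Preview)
Your strategy is exactly the paper's: restrict to the three rank-$2$ subalgebras, invoke Lemma~17 (the extended $A_2$/$B_2$ classification) to list the admissible $(\S_i,\P_i)$, and find a degree conflict between two such lists for a common vertex. Two points deserve correction.

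First, a miscount: only the last two diagrams have all three edges of $B_2$-type. In diagrams 2, 3, 4 the edge $\{1,3\}$ is simple ($a_{13}a_{31}=1$), so there you have one $A_2$-restriction and two $B_2$-restrictions. The paper exploits precisely that $A_2$-edge in those cases: from $\g_{13}$ one gets $(\deg_3\S_1,\deg_3\P_1)\in\{(0,1),(1,0)\}$ (with the $A_2$-parameter $b\in\C[H_2]$ contributing nothing to $\deg_3$), while from the $B_2$-subalgebra $\g_{12}$ one gets $\{(0,2),(2,0)\}$ or $\{(0,2),(2,0),(1,1)\}$. So those three cases are handled by the same degree-one-versus-degree-two clash you described for diagram~1.

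Second, for the genuinely all-$B_2$ diagrams 5 and 6 your ``distinct linear combinations'' heuristic is too loose and your proposed $16$-fold enumeration is unnecessary. The paper's mechanism is sharper: the arrow orientations force a fixed vertex (vertex~$1$ in diagram~5, vertex~$2$ in diagram~6) to play the \emph{long} role (index~``1'' of Example~10) in one $B_2$-pair and the \emph{short} role (index~``2'') in another. For the long role the list of $(\S,\P)$ is quadratic in a single linear form involving the transverse variable, giving degrees $\{(0,2),(2,0)\}$; for the short role, the $B_2$ classification has no free parameter $b$, and after the change of basis the relevant factors have degree at most $1$ in the same variable, yielding $\{(0,1),(1,0)\}$. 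This reproduces the same contradiction without any case-by-case matching of linear forms.
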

\begin{proof}
The Cartan matrix $A$ for $\g$ is one of the following, respectively,
\[\begin{aligned}
&\begin{pmatrix}
2 & -1 & -1 \\
-1 & 2 & -1 \\
-1  & -2 & 2
\end{pmatrix},\begin{pmatrix}
2 & -1 & -1 \\
-2 & 2 & -1 \\
-1  & -2 & 2
\end{pmatrix},\begin{pmatrix}
2 & -1 & -1 \\
-2 & 2 & -2 \\
-1  & -1 & 2
\end{pmatrix},\\
&\begin{pmatrix}
2 & -2 & -1 \\
-1 & 2 & -1 \\
-1  & -2 & 2
\end{pmatrix},\begin{pmatrix}
2 & -1 & -2 \\
-2 & 2 & -1 \\
-1  & -2 & 2
\end{pmatrix},\begin{pmatrix}
2 & -1 & -1 \\
-2 & 2 & -1 \\
-2  & -2 & 2
\end{pmatrix}
\end{aligned}\]
Clearly, $A$ is invertible. Let $\{H_1,H_2,H_3\}$ be the dual basis for $\alpha_1,\alpha_2,\alpha_3$.
For $k=1,2,3$, let $\g_k$ be the subalgebra generated by $\{e_i,f_i,\h|i\in\{1,2,3\}\setminus\{k\}\}$.
Suppose that $\H(\g)\neq\varnothing$ and let $M\in\H(\g)$. Define $M_k=\mathrm{Res}_{\g_k}^{\g}M$ for $k=1,2,3$, and $\S_i=e_i\cdot1,\P_i=f_i\cdot1$.
We will cary out the proof case by case.

{\bf Case 1:}
$A=\begin{pmatrix}
2 & -1 & -1 \\
-1 & 2 & -1 \\
-1  & -2 & 2
\end{pmatrix}$.

In $M_3$, from Lemma \ref{A_1}, we know that $(\S_2,\P_2)$ is one of the following
\[
\left\{\begin{array}{l}
(a(H_2-H_3+b),a^{-1}(H_1-H_2+b)),\\
(a,-a^{-1}(H_2-H_1-b)(H_2-H_3+b+1)),\\
(-a(H_2-H_1-b-1)(H_2-H_3+b),a^{-1}),\\
(a(H_2-H_1-b-1),a^{-1}(H_3-H_2-b-1)),
\end{array}\right.
\]
where $a\in\C^*,b\in\C[H_3]$. Therefore,
\[
(\deg_1\S_2,\deg_1\P_2)=(0,1) \text{ or } (1,0).
\]
However, in $M_1$, following from Lemma \ref{AXLEM}, there exists $c\in\C^*$ such that $(\S_2,\P_2)$ is one of the following
\[
\left\{\begin{array}{l}
(c,-c^{-1}(H_2-\frac{1}{2}H_1-\frac{1}{2}H_3-\frac{3}{4})(H_2-\frac{1}{2}H_1-\frac{1}{2}H_3-\frac{1}{4})),\\
(-c(H_2-\frac{1}{2}H_1-\frac{1}{2}H_3+\frac{3}{4})(H_2-\frac{1}{2}H_1-\frac{1}{2}H_3+\frac{1}{4}),c^{-1}).
\end{array}\right.
\]
So,
\[
(\deg_1\S_2,\deg_1\P_2)=(0,2) \text{ or } (2,0).
\]
This leads to a contradiction. Hence, $\H(\g)=\varnothing$.

{\bf Case 2:} $A=\begin{pmatrix}
2 & -1 & -1 \\
-2 & 2 & -1 \\
-1  & -2 & 2
\end{pmatrix}$.

In $M_3$, there exists $a\in\C^*$ such that $(\S_1,\P_1)$ is one of the following
\[
\left\{\begin{array}{l}
(a,-a^{-1}(H_1-\frac{1}{2}H_2-\frac{1}{2}H_3-\frac{3}{4})(H_1-\frac{1}{2}H_2-\frac{1}{2}H_3-\frac{1}{4})),\\
(-a(H_1-\frac{1}{2}H_2-\frac{1}{2}H_3+\frac{3}{4})(H_1-\frac{1}{2}H_2-\frac{1}{2}H_3+\frac{1}{4}),a^{-1}).
\end{array}\right.
\]
Hence,
\[
(\deg_3\S_1,\deg_3\P_1)=(0,2) \text{ or } (2,0).
\]
In $M_2$, there exists $c\in\C^*,b\in\C[H_2]$ such that $(\S_1,\P_1)$ is one of the following
\[
\left\{\begin{array}{l}
(c(H_3-H_1-\frac{1}{3}H_2-b),c^{-1}(H_1-\frac{4}{3}H_2-b)),\\
(c,c^{-1}(H_1-\frac{4}{3}H_2-b)(H_3-H_1-\frac{1}{3}H_2-b-1)),\\
(c(H_1-\frac{4}{3}H_2-b-1)(H_3-H_1-\frac{1}{3}H_2-b),c^{-1}),\\
(c(H_1-\frac{4}{3}H_2-b-1),c^{-1}(H_3-H_1-\frac{1}{3}H_2-b-1)).
\end{array}\right.
\]
Therefore,
\[
(\deg_3\S_1,\deg_3\P_1)=(0,1) \text{ or } (1,0).
\]
This is a contradiction. So, $\H(\g)=\varnothing$.

{\bf Case 3:} $A=\begin{pmatrix}
2 & -1 & -1 \\
-2 & 2 & -2 \\
-1  & -1 & 2
\end{pmatrix}$.

In $M_3$, there exists $a\in\C^*$ such that $(\S_1,\P_1)$ is one of the following
\[
\left\{\begin{array}{l}
(a,-a^{-1}(H_1-\frac{1}{2}H_2-\frac{1}{2}H_3-\frac{3}{4})(H_1-\frac{1}{2}H_2-\frac{1}{2}H_3-\frac{1}{4})),\\
(-a(H_1-\frac{1}{2}H_2-\frac{1}{2}H_3+\frac{3}{4})(H_1-\frac{1}{2}H_2-\frac{1}{2}H_3+\frac{1}{4}),a^{-1}).
\end{array}\right.
\]
Hence,
\[
(\deg_3\S_1,\deg_3\P_1)=(0,2) \text{ or } (2,0).
\]
In $M_2$, there exists $c\in\C^*,b\in\C[H_2]$ such that $(\S_1,\P_1)$ is one of the following
\[
\left\{\begin{array}{l}
(c(H_3-H_1-b),c^{-1}(H_1-H_2-b)),\\
(c,c^{-1}(H_1-H_2-b)(H_3-H_1-b-1)),\\
(c(H_1-H_2-b-1)(H_3-H_1H_2-b),c^{-1}),\\
(c(H_1-H_2-b-1),c^{-1}(H_3-H_1-b-1)).
\end{array}\right.
\]
Therefore,
\[
(\deg_3\S_1,\deg_3\P_1)=(0,1) \text{ or } (1,0).
\]
This is a contradiction. So, $\H(\g)=\varnothing$.

{\bf Case 4:} $A=\begin{pmatrix}
2 & -2 & -1 \\
-1 & 2 & -1 \\
-1  & -2 & 2
\end{pmatrix}$.

In $M_3$, there exists $a\in\C^*$ such that $(\S_1,\P_1)$ is one of the following
\[
\left\{\begin{array}{l}
(a(2H_2-H_1-H_3+\frac{1}{2}),a^{-1}(H_1-2H_3+\frac{1}{2})),\\
(a,-a^{-1}(H_1-2H_3+\frac{1}{2})(2H_2-H_1-H_3-\frac{1}{2})),\\
(a(H_1-2H_3-\frac{1}{2})(2H_2-H_1-H_3+\frac{1}{2}),a^{-1}),\\
(a(H_1-2H_3-\frac{1}{2}),a^{-1}(2H_2-H_1-H_3-\frac{1}{2})).
\end{array}\right.
\]
Hence,
\[
(\deg_3\S_1,\deg_3\P_1)=(0,2),(2,0) \text{ or } (1,1).
\]
In $M_2$, there exists $c\in\C^*,b\in\C[H_2]$ such that $(\S_1,\P_1)$ is one of the following
\[
\left\{\begin{array}{l}
(c(H_3-H_1-b),c^{-1}(H_1-2H_2-b)),\\
(c,c^{-1}(H_1-2H_2-b)(H_3-H_1-b-1)),\\
(c(H_1-2H_2-b-1)(H_3-H_1-b),c^{-1}),\\
(c(H_1-2H_2-b-1),c^{-1}(H_3-H_1-b-1)).
\end{array}\right.
\]
Therefore,
\[
(\deg_3\S_1,\deg_3\P_1)=(0,1) \text{ or } (1,0).
\]
This is a contradiction. So, $\H(\g)=\varnothing$.

{\bf Case 5:} $A=\begin{pmatrix}
2 & -1 & -2 \\
-2 & 2 & -1 \\
-1  & -2 & 2
\end{pmatrix}$.

In $M_3$, there exists $a\in\C^*$ such that $(\S_1,\P_1)$ is one of the following
\[
\left\{\begin{array}{l}
(a,-a^{-1}(H_1-\frac{1}{2}H_2-H_3+\frac{3}{4})(H_1-\frac{1}{2}H_2-H_3+\frac{1}{4})),\\
(-a(H_1-\frac{1}{2}H_2-H_3-\frac{3}{4})(H_1-\frac{1}{2}H_2-H_3-\frac{1}{4}),a^{-1}).
\end{array}\right.
\]
Hence,
\[
(\deg_3\S_1,\deg_3\P_1)=(0,2) \text{ or } (2,0).
\]
In $M_2$, there exists $c\in\C^*$ such that $(\S_1,\P_1)$ is one of the following
\[
\left\{\begin{array}{l}
(c(2H_3-H_1-{2}{H_2}+\frac{1}{2}),c^{-1}(H_1-{3}H_2+\frac{1}{2})),\\
(c,c^{-1}(H_1-{3}H_2+\frac{1}{2})(2H_3-H_1-{2}{H_2}-\frac{1}{2})),\\
(c(H_1-{3}H_2-\frac{1}{2})(2H_3-H_1-{2}{H_2}+\frac{1}{2}),c^{-1}),\\
(c(H_1-{3}H_2-\frac{1}{2}),c^{-1}(2H_3-H_1-{2}{H_2}-\frac{1}{2}))).
\end{array}\right.
\]
Therefore,
\[
(\deg_3\S_1,\deg_3\P_1)=(0,1) \text{ or } (1,0).
\]
This is a contradiction. So, $\H(\g)=\varnothing$.

{\bf Case 6:} $A=\begin{pmatrix}
2 & -1 & -1 \\
-2 & 2 & -1 \\
-2  & -2 & 2
\end{pmatrix}$.

In $M_1$, there exists $a\in\C^*$ such that $(\S_2,\P_2)$ is one of the following
\[
\left\{\begin{array}{l}
(a,-a^{-1}(H_2-\frac{1}{2}H_3-H_1-\frac{3}{4})(H_2-\frac{1}{2}H_3-H_1-\frac{1}{4})),\\
(-a(H_2-\frac{1}{2}H_3-H_1+\frac{3}{4})(H_2-\frac{1}{2}H_3-H_1+\frac{1}{4}),a^{-1}).
\end{array}\right.
\]
Hence,
\[
(\deg_1\S_2,\deg_1\P_2)=(0,2) \text{ or } (2,0).
\]
In $M_3$, there exists $c\in\C^*$ such that $(\S_2,\P_2)$ is one of the following
\[
\left\{\begin{array}{l}
(c(2H_1-H_2-H_3+\frac{1}{2}),c^{-1}(H_2-2H_3+\frac{1}{2})),\\
(c,c^{-1}(H_2-2H_3+\frac{1}{2})(2H_1-H_2-H_3-\frac{1}{2})),\\
(c(H_2-2H_3-\frac{1}{2})(2H_1-H_2-H_3+\frac{1}{2}),c^{-1}),\\
(c(H_2-2H_3-\frac{1}{2}),c^{-1}(2H_1-H_2-H_3-\frac{1}{2}))).
\end{array}\right.
\]
Therefore,
\[
(\deg_1\S_2,\deg_1\P_2)=(0,1) \text{ or } (1,0).
\]
This is a contradiction. So, $\H(\g)=\varnothing$.
\end{proof}

\

\noindent {\bf Acknowledgments.} The research presented in this
paper was carried out during the visit of Y.C. and H.T. to Wilfrid
Laurier University supported by University Research Professor grant
in the winter of 2014/2015.  K.Z. is partially
supported by NSF of China (Grant 11271109, 11371134) and NSERC (Grant 311907-2015).
  Y.C. likes to thank the China Scholar Council for the financial support.

\vspace{10mm}

\noindent Y. Cai: Academy of Mathematics and Systems Science, Chinese Academy of Sciences, Beijing, 100190, P.R. China, and Wu Wen Tsun Key Laboratory of Mathematics, Chinese Academy of Science, School of Mathematical Sciences, University of Science and Technology of China, Hefei, 230026, Anhui, P. R. China.
Email: yatsai@mail.ustc.edu.cn

\vspace{0.2cm}
\noindent H. Tan: School of Mathematics and Statistics, Northeast Normal University, Changchun, Jilin, 130042, P.R. China, and Department of Applied Mathematics, Changchun University of Science and Technology, Changchun, Jilin,
130022, P.R. China.
Email: hjtan9999@yahoo.com

\vspace{0.2cm}
 \noindent K. Zhao: College of
Mathematics and Information Science, Hebei Normal (Teachers)
University, Shijiazhuang, Hebei, 050016 P. R. China, and Department of Mathematics, Wilfrid
Laurier University, Waterloo, ON, Canada N2L 3C5. Email:
kzhao@wlu.ca

\end{document}